\begin{document}
\numberwithin{equation}{section}
\newtheorem{theorem}{Theorem}
\newtheorem{algo}{Algorithm}
\newtheorem{lem}{Lemma} 
\newtheorem{de}{Definition} 
\newtheorem{ex}{Example}
\newtheorem{pr}{Proposition} 
\newtheorem{claim}{Claim} 
\newtheorem*{re}{Remark}
\newtheorem*{asi}{Aside}
\newtheorem{co}{Corollary}
\newtheorem{conv}{Convention}
\newcommand{\di}{\hspace{1.5pt} \big|\hspace{1.5pt}}
\newcommand{\idi}{\hspace{.5pt}|\hspace{.5pt}}
\newcommand{\hs}{\hspace{1.3pt}}
\newcommand{\thmf}{Theorem~1.15$'$}
\newcommand{\ndi}{\centernot{\big|}}
\newcommand{\nidi}{\hspace{.5pt}\centernot{|}\hspace{.5pt}}
\newcommand{\lpp}{\mbox{$\hspace{0.12em}\shortmid\hspace{-0.62em}\alpha$}}
\newcommand{\btt}{\mbox{$\raisebox{-0.59ex}
  {${{l}}$}\hspace{-0.215em}\beta\hspace{-0.88em}\raisebox{-0.98ex}{\scalebox{2}
  {$\color{white}.$}}\hspace{-0.416em}\raisebox{+0.88ex}
  {$\color{white}.$}\hspace{0.46em}$}{}}
  \newcommand{\un}{\hs\underline{\hspace{5pt}}\hs}
\newcommand{\lp}{\widehat{\lpp}}
\newcommand{\bt}{\hspace*{2pt}\widehat{\hspace*{-2pt}\btt}}
\newcommand{\PQ}{\bb{P}^1(\bb{Q})}
\newcommand{\pmn}{\cl{P}_{M,N}}
\newcommand{\he}{holomorphic eta quotient\hspace*{2.5pt}}
\newcommand{\hes}{holomorphic eta quotients\hspace*{2.5pt}}
\newcommand{\defG}{Let $G\subset\GG$ be a subgroup that is conjugate to a finite index subgroup of $\G$. } 
\newcommand{\defg}{Let $G\subset\GG$ be a subgroup that is conjugate to a finite index subgroup of $\G$\hs\hs} 
\renewcommand{\phi}{\varphi}
\newcommand{\Z}{\bb{Z}}
\newcommand{\ZD}{\Z^{\D}}
\newcommand{\N}{\bb{N}}
\newcommand{\Q}{\bb{Q}}
\newcommand{\A}{\widehat{A}}
\newcommand{\pii}{{{\pi}}}
\newcommand{\R}{\bb{R}}
\newcommand{\C}{\bb{C}}
\newcommand{\I}{\hs\cl{I}_{n,N}}
\newcommand{\St}{\operatorname{Stab}}
\newcommand{\D}{\cl{D}_N}
\newcommand{\rh}{{{\boldsymbol\rho}}}
\newcommand{\bh}{{\cl{M}}} 
\newcommand{\lv}{\hyperlink{level}{{\text{level}}}\hspace*{2.5pt}}
\newcommand{\fct}{\hyperlink{factor}{{\text{factor}}}\hspace*{2.5pt}}
\newcommand{\q}{\hyperlink{q}{{\mathbin{q}}}}
\newcommand{\rd}{\hyperlink{redu}{{{\text{reducible}}}}\hspace*{2.5pt}}
\newcommand{\ird}{\hyperlink{irredu}{{{\text{irreducible}}}}\hspace*{2.5pt}}
\newcommand{\str}{\hyperlink{strong}{{{\text{strongly reducible}}}}\hspace*{2.5pt}}
\newcommand{\rdn}{\hyperlink{redon}{{{\text{reducible on}}}}\hspace*{2.5pt}}
\newcommand{\atl}{\hyperlink{atinv}{{\text{Atkin-Lehner involution}}}\hspace*{3.5pt}}
\newcommand{\T}{\mathrm{T}}
\newcommand{\nm}{{N,M}}
\newcommand{\mn}{{M,N}}
\renewcommand{\H}{\fr{H}}
\newcommand{\W}{\text{\calligra W}_n}
\newcommand{\GG}{\cl{G}}
\newcommand{\g}{\fr{g}}
\newcommand{\Gm}{\Gamma}
\newcommand{\Gmtl}{\widetilde{\Gamma}_\ell}
\newcommand{\gm}{\gamma}
\newcommand{\go}{\gamma_1}
\newcommand{\gmt}{\widetilde{\gamma}}
\newcommand{\gmdt}{\widetilde{\gamma}'}
\newcommand{\gmot}{\widetilde{\gamma}_1}
\newcommand{\gmdot}{{\widetilde{\gamma}}'_1}
\newcommand{\s}{\Large\text{{\calligra r}}\hspace{1.5pt}}
\newcommand{\ms}{m_{{{S}}}}
\newcommand{\nisim}{\centernot{\sim}}
\newcommand{\level}{\hyperlink{level}{{\text{level}}}}
\newcommand{\Redcon}{the \hyperlink{red}{\text{Reducibility~Conjecture}}}
\newcommand{\Conred}{Conjecture~$1'$}
\newcommand{\Conredd}{Conjecture~$1''$}
\newcommand{\Conreddd}{Conjecture~$1'''$}
\newcommand{\Conired}{Conjecture~$2'$}
\newtheorem*{pro}{\textnormal{\textit{Proof of the proposition}}}
\newtheorem*{cau}{Caution}
\newtheorem{thrmm}{Theorem}[section]
\newtheorem{no}{Notation}
\renewcommand{\thefootnote}{\fnsymbol{footnote}}
\newtheorem{oq}{Open question}
\newtheorem{conj}{Conjecture}
\newtheorem{hy}{Hypothesis} 
\newtheorem{expl}{Example}
\newcommand\ileg[2]{\bigl(\frac{#1}{#2}\bigr)}
\newcommand\leg[2]{\Bigl(\frac{#1}{#2}\Bigr)}
\newcommand{\e}{\eta}
\newcommand{\sgn}{\operatorname{sgn}}
\newcommand{\bb}{\mathbb}
\newtheorem*{conred}{Conjecture~\ref{con1}$\mathbf{'}$}
\newtheorem*{conredd}{Conjecture~\ref{con1}$\mathbf{''}$}
\newtheorem*{conreddd}{Conjecture~\ref{con1}$\mathbf{'''}$}
\newtheorem*{conired}{Conjecture~\ref{19.1Aug}$\mathbf{'}$}
\newtheorem*{procl}{\textnormal{\textit{Proof}}}
\newtheorem*{thmff}{Theorem~\ref{7.4Jul}$\mathbf{'}$}
\newtheorem*{coo}{Corollary~\ref{17Aug}$\mathbf{'}$}
\newcommand{\cooo}{Corollary~\ref{17Aug}$'$}
\newtheorem*{cotw}{Corollary~\ref{17.1Aug}$\mathbf{'}$}
\newcommand{\cotww}{Corollary~\ref{17.1Aug}$'$}
\newtheorem*{cothr}{Corollary~\ref{17.2Aug}$\mathbf{'}$}
\newcommand{\cothre}{Corollary~\ref{17.2Aug}$'$}
\newtheorem*{cne}{Corollary~\ref{15.5Aug}$\mathbf{'}$}
\newcommand{\cnew}{Corollary~\ref{15.5Aug}$'$\hspace{3.5pt}}
\newcommand{\fr}{\mathfrak}
\newcommand{\cl}{\mathcal}
\newcommand{\rad}{\mathrm{rad}}
\newcommand{\ord}{\operatorname{ord}}
\newcommand{\m}{\setminus}
\newcommand{\G}{\Gamma_1}
\newcommand{\GN}{\Gamma_0(N)}
\newcommand{\X}{\widetilde{X}}
\renewcommand{\P}{{\textup{p}}} 
\newcommand{\al}{{\hs\operatorname{al}}}
\newcommand{\p}{p_\text{\tiny (\textit{N})}}
\newcommand{\pN}{p_\text{\tiny\textit{N}}}
\newcommand{\U}{u_\textit{\tiny N}}
\newcommand{\Upr}{u_{\textit{\tiny N}^\prime}}
\newcommand{\Up}{u_{\textit{\tiny p}^\textit{\tiny e}}}
\newcommand{\Un}{u_{\textit{\tiny p}_\textit{\tiny 1}^{\textit{\tiny e}_\textit{\tiny 1}}}}
\newcommand{\Um}{u_{\textit{\tiny p}_\textit{\tiny m}^{\textit{\tiny e}_\textit{\tiny m}}}}
\newcommand{\Ut}{u_{\text{\tiny 2}^\textit{\tiny a}}}
\newcommand{\At}{A_{\text{\tiny 2}^\textit{\tiny a}}}
\newcommand{\Uh}{u_{\text{\tiny 3}^\textit{\tiny b}}}
\newcommand{\Ah}{A_{\text{\tiny 3}^\textit{\tiny b}}}
\newcommand{\Uprl}{u_{\textit{\tiny N}_1}}
\newcommand{\Uprlm}{u_{\textit{\tiny N}_i}}
\newcommand{\UM}{u_\textit{\tiny M}}
\newcommand{\UMp}{u_{\textit{\tiny M}_1}}
\newcommand{\w}{\omega_\textit{\tiny N}}
\newcommand{\wm}{\omega_\textit{\tiny M}}
\newcommand{\wa}{\omega_{\text{\tiny N}_\textit{\tiny a}}}
\newcommand{\wma}{\omega_{\text{\tiny M}_\textit{\tiny a}}}
\renewcommand{\P}{{\textup{p}}}

\title[\tiny{Finiteness of simple holomorphic eta quotients of a given weight}]
{Finiteness of simple holomorphic eta quotients of a given weight}

\author{Soumya Bhattacharya}
\address 
{CIRM : FBK\\
via Sommarive 14\\
I-38123 Trento}

\email{soumya.bhattacharya@gmail.com}
\subjclass[2010]{Primary 11F20, 11F37, 11F11; Secondary 
11G16, 11F12}

\maketitle

 \begin{abstract}
  We 
  provide a simplified proof of Zagier's conjecture / Mersmann's theorem
  which states 
  that\hspace{.85pt} of any particular weight, 
  there are only finitely many holomorphic eta quotients,
  none of which is an integral rescaling of another eta quotient
  or a product of two 
  holomorphic eta quotients other than 1 and itself.
 \end{abstract}
 \section{Introduction}
 The Dedekind eta function is defined by the infinite product:\hypertarget{queue}{}
 \begin{equation} \eta(z):=q^{\frac{1}{24}}\prod_{n=1}^\infty(1-q^n)\hspace{5pt}\text{for all $z\in\H$,}
\label{17.4Aug}\end{equation} 
where $q^r=q^r(z):=e^{2\pi irz}$ for all $r$ 
and 
$\H:=\{\tau\in\C\hs\idi\operatorname{Im}(\tau)>0\}$. 
Eta is a holomorphic function on $\H$ with no zeros.
This function comes up naturally in many areas of Mathematics (see the Introduction in \cite{B-three} for a brief
overview of them). 
The function $\e$ is a modular form
of weight $1/2$ with a multiplier system 
on $\operatorname{SL}_2(\Z)$ (see \cite{b}).
An 
eta quotient $f$ is a finite product of the form 
\begin{equation}
 \prod\e_d^{X_d},
\label{13.04.2015}\end{equation}
where $d\in\N$, $\eta_d$ is the rescaling of $\eta$ by $d$, defined by
\begin{equation}
 \e_d(z):=\e(dz) \ \text{ for all $z\in\H$}
\end{equation}
and $X_d\in\Z$.
Eta quotients naturally inherit modularity 
from $\e$: The eta quotient $f$ in (\ref{13.04.2015}) transforms like a modular form of
weight $\frac12\sum_dX_d$ with a multiplier system on suitable congruence subgroups of $\operatorname{SL}_2(\Z)$: The largest among
these subgroups is 
\begin{equation}
 \Gm_0(N):=\Big{\{}\begin{pmatrix}a&b\\ c&d\end{pmatrix}\in
\operatorname{SL}_2(\Z)\hspace{3pt}\Big{|}\hspace{3pt} c\equiv0\hspace*{-.3cm}\pmod N\Big{\}},
\end{equation}
where 
\begin{equation}
 N:=\operatorname{lcm}\{d\in\N\hs\idi\hs X_d\neq0\}.
\end{equation}
We call $N$ the \emph{level} of $f$.
Since $\eta$ is non-zero on $\H$, 
the eta quotient $f$ 
is holomorphic if and only if $f$ does not have any pole at the cusps of 
$\Gamma_0(N)$.

We call an eta quotient $f$ \emph{primitive} if 
there does not exist any other eta quotient $h$ and any $\nu\in\N$
such that $f(z)=h(\nu z)$ for all $z\in\H$.
Let $f$, $g$ and $h$ be nonconstant \hes on $\Gm_0(M)$ (i.~e. 
their levels divide $M$) 
such that 
$f=g\times h$. Then we say that $f$ is \emph{factorizable on} $\Gm_0(M)$. 
We call a holomorphic eta quotient $f$ of level $N$ \emph{quasi-irreducible} (resp. \emph{irreducible}),
if it is not factorizable on $\Gm_0(N)$ (resp. on $\Gm_0(M)$ for all multiples~$M$ of $N$).
Here, it is worth mentioning that the notions of irreducibility and quasi-irreducibility of holomorphic eta quotients are conjecturally equivalent (see \cite{B-three}).

We say that a holomorphic eta quotient is \emph{simple} if it is 
 both primitive and quasi-irreducible.
Such eta quotients were first considered by Zagier, who conjectured (see \cite{z}) that:
 \begin{quote}
  \emph{There are only finitely many simple holomorphic eta quotients
  of a given weight.
}
 \end{quote}
This conjecture was 
established by his student Mersmann in 
an 
excellent \emph{Diplomarbeit} \cite{c}.  The proof of this conjecture 
occupied  more than half of his 110 pages long thesis about which 
K\"ohler at  p.~117 in \cite{b} wrote:
 \begin{quote}
``$\hdots$ the proof is rather long and can hardly
be called lucid, although doubtlessly it is ingenious. We were not able to
simplify it sufficiently so that we could reasonably incorporate it into this
monograph.''
 \end{quote}
Motivated by the above paragraph in K\"ohler's book, here we simplify 
the proof of
Mersmann's theorem by incorporating a few new ideas into it.  We recall from \cite{B-four} that the following analog of Zagier's conjecture also holds:
\emph{There are only finitely many simple holomorphic eta quotients
  of a given level.}
In \cite{B-three}, we see an application of Mersmann's theorem together with its above analog
to the fundamental problem of determining 
irreducibility of holomorphic eta quotients. In his thesis, Mersmann also proved another conjecture of Zagier on the exhaustiveness of Zagier's list of 
simple holomorphic eta quotients of 
weight $1/2$. We give a short proof of the last result in~\cite{B-six}. 
We shall also see examples of simple holomorphic eta quotients of arbitrarily large levels in \cite{B-seven}.


\section{Notations and the basic facts}
By $\N$ we denote the set of positive integers.
We define the operation $\odot:\N\times\N\rightarrow\N$ by 
\begin{equation}
 d_1\odot d_2:=\frac{d_1d_2}{\gcd(d_1, d_2)^2}.
\label{24.1July}\end{equation}
For $N\in\N$, by $\D$ we denote the set of divisors of $N$.
For 
$d\in\D$, we say that $d$ \emph{exactly divides} $N$ and write $d\|N$ 
if $\gcd(d,N/d)=1$.
We denote the set of such divisors of $N$ by $\cl{E}_N$. 
It follows trivially
that $(\cl{E}_N,\odot)$ is a boolean group (i.~e. each element of  $\cl{E}_N$ is the inverse of itself) 
and that $\cl{E}_N$ acts on $\D$ by $\odot$.
For 
$X\in\ZD$, we define the eta quotient $\e^X$
   by
   \begin{equation}\label{3Jan15.1}
    \e^X:=\displaystyle{\prod_{d\in\D}\eta_d^{X_d}},
 \end{equation}
where $X_d$ is the 
value of $X$ 
at $d\in\D$ whereas $\e_d$ denotes the rescaling of $\e$ by $d$.
Clearly, the level of $\e^X$ divides $N$. In other words, $\e^X$ transforms like a modular form on $\GN$. 

An \emph{eta quotient 
on $\Gm_0(N)$} is
an eta quotient whose level divides $N$.
For $N, k\in\Z$, let
 $\hypertarget{etamer}{\bb{E}^!_{N,k}}$ $($resp. $\hypertarget{etahol}{\bb{E}_{N,k}})$ 
 be the set
of eta quotients $($resp. holomorphic eta quotients$)$ of weight $k/2$ on $\GN$.
For $n\in\cl{E}_N$,
we define the \emph{Atkin-Lehner map}
$\al_{n,N}:\bb{E}^!_{N,k}\rightarrow\bb{E}^!_{N,k}$
by
\begin{equation}
\al_{n,N}\Big{(}\prod_{d\in\D}\e_d^{X_d}\Big{)}:=\prod_{d\in\D}\e_{n\odot d}^{X_d}.\label{29.1Jul}
\end{equation} 
 Since $\cl{E}_N$ is a boolean group and since it acts on $\D$ by $\odot$,
 it follows trivially that the map $\al_{n,N}:\bb{E}^!_{N,k}\rightarrow\bb{E}^!_{N,k}$ is an involution.
It is easy to show that the above definition 
is compatible
with the usual definition (see \cite{al}) of 
 Atkin-Lehner involutions of modular forms on $\GN$  up to multiplication by a 
complex number (see the Preliminaries in \cite{B-two}).
So, 
if $f$ is an eta quotient on $\GN$ and $n\in\cl{E}_N$,
then $f$ is holomorphic
if and only if so is $\al_{n,N}(f)$.
In particular,  the involution $W_N:=\al_{N,N}$ from $\bb{E}^!_{N,k}$ to $\bb{E}^!_{N,k}$ is called \emph{Fricke involution}. 

Recall that a holomorphic eta quotient $f$ on $\GN$ is
an eta quotient on $\GN$ that does not have any poles at the cusps. Under the action of $\GN$ on $\PQ$
by M\"obius transformation, for 
$a,b\in\Z$ with $\gcd(a,b)=1$,
we have
\begin{equation}
[a:b]\hspace{.1cm}
{{{{\sim}}}}_{\hspace*{-.05cm}{{{{\GN}}}}}
\hspace{.08cm}[a':\gcd(N,b)]\label{19.05.2015} 
\end{equation}
for some $a'\in\Z$ which is coprime to $\gcd(N,b)$ (see \cite{ds}).
We identify $\PQ$ with $\Q\cup\{\infty\}$ via the canonical bijection that maps $[\alpha:\lambda]$ to 
$\alpha/\lambda$ if $\lambda\neq0$ and to $\infty$ if $\lambda=0$. 
For $s\in\Q\cup\{\infty\}$ and a weakly holomorphic modular form $f$ on $\GN$, the order of $f$ at the cusp $s$ of $\GN$ is 
the exponent of  \hyperlink{queue}{$q^{{1}/{w_s}}$} 
occurring with the first nonzero coefficient in the 
$q$-expansion of $f$
at the cusp $s$,
where $w_s$ is the width of the cusp $s$ (see \cite{ds}, \cite{ra}).
The following is a minimal set of representatives of the cusps of $\GN$ (see \cite{ds}, \cite{ymart}):
\begin{equation}
\cl{S}_N:=\Big{\{}\frac{a}{t}\in\Q\hspace{2.5pt}{\di}\hspace{2.5pt}t\in\cl{D}_N,\hspace{2pt} a\in\bb{Z}, 
 \hspace{2pt}\gcd(a,t)=1\Big{\}}/\sim\hspace{1.5pt},
\end{equation}
where $\dfrac{a}{t}\sim\dfrac{b}{t}$ if and only if $a\equiv b\pmod{\gcd(t,N/t)}$.
For $d\in\D$ and for $s=\dfrac{a}t\in\cl{S}_N$ with $\gcd(a,t)=1$, we have
\begin{equation}
 \ord_s(\e_d\hspace{1pt};\GN)= \frac{N\cdot\gcd(d,t)^2}{24\cdot d\cdot\gcd(t^2,N)}\in\frac1{24}\N 
\label{26.04.2015}\end{equation}
(see 
\cite{ymart}). 
It is easy to check the above inclusion  
when $N$ is a prime power. 
The general case 
follows by multiplicativity (see (\ref{27.04.2015}), (\ref{20.3Sept}) and (\ref{13May})).
It follows that for all $X\in\ZD$, we have
\begin{equation}
  \ord_s(\e^X\hspace{1pt};\GN)= \frac1{24}\sum_{d\in\D}\frac{N\cdot\gcd(d,t)^2}{d\cdot\gcd(t^2,N)}X_d\hspace{1.5pt}. 
\label{27.04}\end{equation}
 In particular, 
that implies
\begin{equation}
 \ord_{a/t}(\e^X\hspace{1pt};\GN)=\ord_{1/t}(\e^X\hspace{1pt};\GN)
\label{27.04.2015.1}\end{equation}
for all $t\in\D$ and for all the $\varphi(\gcd(t,N/t))$ inequivalent cusps of $\GN$
represented by rational numbers
of the form $\dfrac{a}{t}\in\cl{S}_N$ with $\gcd(a,t)=1$,
where $\varphi$ denotes Euler's totient function.

We define the \emph{\hypertarget{om}{order map}}
$\cl{O}_N:\ZD\rightarrow\frac1{24}\ZD$ of level $N$ as the map which sends $X\in\ZD$ to the ordered set of
orders of the eta quotient $\e^X$ at the cusps $\{1/t\}_{t\in\D}$ of $\GN$. Also, we define
\emph
{order matrix} $A_N\in\Z^{\D\times\D}$ of level $N$ by 
\begin{equation}
 A_N(t,d):=24\cdot\ord_{1/t}(\e_d\hspace{1pt};\GN) 
\label{27.04.2015}\end{equation}
for all $t,d\in\D$.
By linearity of the order map, we have 
\begin{equation}
\cl{O}_N(X)=\frac1{24}\cdot A_NX\hspace{1.5pt}. 
\label{28.04}\end{equation}
From (\ref{27.04.2015}) and (\ref{26.04.2015}), we note that the matrix 
$A_N$ is not symmetric. 
It would have been much easier for us to work with $A_N$
if it would have been 
symmetric 
(for example, see
Lemma~\ref{il} and its proof 
 in Section~\ref{symsec}).
 So, we define the \emph{symmetrized order matrix} $\A_N\in\Z^{\D\times\D}$
by  
\begin{equation}
 \widehat{A}_N(t,\un)=\gcd(t,N/t)\cdot A_N(t,\un)\hspace{6pt}\text{for all $t\in\D$},
\label{20.3Sept}\end{equation}
where $\A_N(t, \un)$ (resp. $A_N(t, \un)$) denotes the row of $\A_N$ (resp. $A_N$) indexed by $t\in\D$.
For example, for a prime power $p^n$, 
we have
\begin{equation}
\A_{p^n}=\begin{pmatrix}
 \vspace{5.8pt}p^n &p^{n-1} &p^{n-2} 
&\cdots  &p &1\\
\vspace{5.8pt} p^{n-1} &p^n &p^{n-1} 
&\cdots  &p^2 &p\\
 \vspace{5.8pt}p^{n-2} &p^{n-1} &p^n 
&\cdots  &p^3 &p^2\\
\vspace{5.8pt} \vdots &\vdots &\vdots 
&\cdots &\vdots &\vdots\\
\vspace{5.8pt} p &p^2 &p^3 
&\cdots &p^n &p^{n-1}\\
 1 &p &p^2 
&\cdots  &p^{n-1} &p^n
\end{pmatrix}.
 \label{23July}
\end{equation}
For $r\in\N$, if $Y,Y'\in\Z^{\D^{\hspace{.5pt}r}}$ is 
such that $Y-Y'$ is
nonnegative at each element of $\D^{\hspace{.5pt}r}$, then
 we write $Y\geq Y'$. Otherwise, we write $Y\ngeq Y'$.
In particular, for $X\in\ZD$, the eta quotient $\e^X$ is holomorphic if and only if $\A_NX\geq0$.

From $(\ref{20.3Sept})$,
$(\ref{27.04.2015})$ and $(\ref{26.04.2015})$, we note that $\A_N(t,d)$ is multiplicative in $N$ and in $d,t\in\D$.
Hence, it follows that
\begin{equation}
 \A_N=\bigotimes_{\substack{p^n\|N\\\text{$p$ prime}}}\A_{p^n},
\label{13May}\end{equation}
where by \hspace{1.5pt}$\otimes$\hspace{.2pt},  we denote the Kronecker product of matrices.\footnote{Kronecker product of matrices is
not commutative. However, 
since any given ordering of the primes dividing $N$ induces a lexicographic ordering on $\cl{D}_N$ 
with which the entries of $\A_N$ are indexed, 
Equation (\ref{13May}) makes sense for all possible 
orderings of the primes dividing $N$.} 

It is easy to verify that for a prime power $p^n$, 
the matrix $\A_{p^n}$ is invertible with the tridiagonal inverse: 
\begin{equation}
\A_{p^n}^{-1}=
\frac{1}{p^n(1-\frac{1}{p^2})}
\begin{pmatrix}
\hspace{6pt}1 &\hspace*{-6pt}-\frac{1}{p} &  
& &  & \\
\vspace{5pt}\hspace*{-1pt}-\frac{1}{p} & \hspace*{-2pt}1+\frac{1}{p^2} &\hspace*{-6pt}-\frac{1}{p} 
& &\textnormal{\Huge 0} & \\
\vspace{5pt}&\hspace*{-6pt}-\frac{1}{p} & \hspace*{-4pt}1+\frac{1}{p^2} &\hspace*{-6pt}-\frac{1}{p} 
 &  &  \\
 &  & \ddots 
 & \ddots & \ddots & \\
\vspace{5pt}\hspace{2pt} &\textnormal{\Huge 0}  &  
 &\hspace*{-6pt}-\frac{1}{p} & \hspace*{-4pt}1+\frac{1}{p^2} &\hspace*{-6pt}-\frac{1}{p}\hspace{2pt}\\
\hspace{2pt} &  & 
&  &\hspace*{-6pt}-\frac{1}{p} & 1\hspace{2pt}
\end{pmatrix}.
 \label{r1}\end{equation}
For general $N$, the invertibility of the matrix $\A_N$ now 
follows by (\ref{13May}).
Hence, any eta quotient on $\GN$ is uniquely determined by its orders at the set of the cusps
$\{1/t\}_{t\in\D}$ of $\GN$. In particular, for distinct $X,X'\in\ZD$, we have $\e^X\neq\e^{X'}$. 
The last statement 
is also implied by the uniqueness of $q$-series expansion:
Let $\e^{\widehat{X}}$ and $\e^{\widehat{X}'}$
be the \emph{eta products} (i.~e.  $\widehat{X}, \widehat{X}'\geq0$)
obtained by multiplying $\e^X$ and $\e^{X'}$ with a common denominator. The claim follows by induction on the weight of $\e^{\widehat{X}}$
(or equivalently, the weight of $\e^{\widehat{X}'}$)
when we compare
the corresponding 
first two exponents of $q$
occurring in the $q$-series expansions of 
$\e^{\widehat{X}}$ and $\e^{\widehat{X}'}$.

For $r\in\N$ and $Y\in\Z^{\D^{\hspace{.5pt}r}}$, we define $|Y|\in\Z^{\D^{\hspace{.5pt}r}}$ 
as the integer-valued function on $\D^{\hspace{.5pt}r}$, whose value at each element of
$\D^{\hspace{.5pt}r}$ is the absolute value of the value taken by $Y$ at that point.
We define \hspace{1pt}$\mathds{1}_N,\hspace{.5pt}\lp_N$ and \hspace{1.5pt}$\bt_N\in\Q^{\cl{D}_N}$ by 
\begin{equation}
\mathds{1}_N(t):=1\hspace{5pt}\text{ for all \hspace{1pt}$t\in\cl{D}_N$}, 
\end{equation}
\begin{equation}
   \lp_N:=(\A_N^{-1})^\T\hspace*{1.5pt}\mathds{1}_N
   \hspace{3pt}\text{ and }\hspace{4.5pt}\bt_N:=|\A_N^{-1}|^\T\hspace*{1.5pt}\mathds{1}_N\hspace*{1.5pt}
. 
   \label{b00}\end{equation}
Then we have
\begin{equation}
   \lp_N(t)=\frac{\phi(t')}{
   t'\hspace{.5pt}\psi(N)}\hspace{4.5pt}\text{ and }\hspace{5.5pt}\bt_N(t)=\frac{\psi(t')}{
   t'\hspace{.5pt}\phi(N)}\hspace{5.5pt}\text{ for all \hspace{2pt}$t\in\cl{D}_N$},
   \label{b}\end{equation}
   where $t'=\gcd(t,N/t)$ and $\phi$, $\hspace{.5pt}\psi:\N\rightarrow\N$ are Euler's $\phi$ 
function 
and Dedekind's $\psi$ function given by
\begin{equation}
  \phi(N) := N\prod_{\substack{p|N\\\text{$p$ prime}}}\left(1-\frac{1}{p}\right)\hspace{5pt}\text{ and }\hspace{5pt}\psi(N):= N\prod_{\substack{p|N\\\text{$p$ prime}}}\left(1+\frac{1}{p}\right).
\label{14.2Sept}\end{equation}
Using (\ref{r1}), the assertions in (\ref{b}) are easy to check if $N$ is a prime power.
The general case again 
follows by multiplicativity.

Let $X\in\ZD$ and let
$f=\e^X$ be an eta quotient on $\GN$ of weight $k/2$ for some $k\in\Z$.
We recall the linear relation between $X$ and the orders of $f$
from (\ref{28.04}). Since $\A_N$ is invertible, so is $A_N$.
Since $\A_N$ 
is symmetric and since for $X\in\ZD$,
the weight the eta quotient $\e^X$ is equal to $\frac12\sum_{d\in\D}X_d$,
summing over the rows of $A^{-1}_N$,
from (\ref{28.04}),
(\ref{20.3Sept}), (\ref{b00}) and (\ref{b}), we get:
\begin{equation}
 \sum_{t\in\D}{\varphi(\gcd(t,N/t))}\cdot\ord_{1/t}(f\hspace{1pt};\GN)=\frac{k\cdot\psi(N)}{24}.
\label{27.04.2015.3}\end{equation}
The above equation is just a special case of the valence formula (see \cite{B-three}).
Since $\ord_{1/t}(f\hspace{1pt};\GN)\in\frac{1}{24}\Z$ (see (\ref{26.04.2015})), from
(\ref{27.04.2015.3}) it follows that of any particular weight, there are only finitely many holomorphic eta quotients
on $\GN$. More precisely, the number of holomorphic eta quotients of weight $k/2$
on $\GN$ is at most the number of solutions of the following equation
\begin{equation}
 \sum_{t\in\D}{\varphi(\gcd(t,N/t))}\cdot x_t=k\cdot\psi(N)
\label{27.04.2015.3Z}\end{equation}
in nonnegative integers $x_t$. Also, Corollary~\ref{4.3Aug} in the next section implies an- other upper bound on 
 the number of such 
 eta quotients.

We end this section with a set of notations which we shall use later:
\vspace*{-.4cm}

\
 \newline$X_{+}\hspace*{9pt}:\hs$ \hspace{1pt}the positive component of $X\in\R^{\D}$\hspace{-2pt},  i.~e.  $X_{+}=\frac{1}{2}(X+|X|)$.
 \newline$X_{-} \hspace*{9pt}:\hs$ \hspace{1pt}the 
negative component of $X\in\R^{\D}$\hspace*{-2pt},  i.~e.  $X_{-}=\frac{1}{2}(X-|X|)$.
 \newline$\sigma(X) \hspace*{.5pt}:\hs$ \hspace{1pt}the sum of the values of $X\in\R^{\D}$. So,  $\sigma(X)=\sigma(X_{+})+\sigma(X_{-})$.
 \newline$\|X\| 
 \hspace{5pt}:\hs$  \hspace{1pt}the $L^1$ norm of $X\in\R^{\D}$,
i.~e.  $\|X\|= \sigma(|X|)=
 \sigma(X_{+})-\sigma(X_{-})$.
 \newline$\|X\|_{\pm}\hspace*{-2pt}:\hs$ \hspace{1pt}
the 
$L^1$ norm of $X_{\pm}\in\R^{\D}$, i.~e. $\|X\|_{\pm}=\|X_{\pm}\|=\pm\sigma(X_{\pm}).$
\vspace*{4.5pt}

\section{Generalization of a result of Mersmann / Rouse-Webb} 
The following lemma will be crucial in our proof of Mersmann's theorem: 
 \begin{lem}For $X\in\ZD$, let $\e^X$ be an eta quotient of weight $k/2$ on $\GN$.
Then we have
\begin{equation}
 \|X\|\leq k\cdot F(N)+G(N)\cdot\|\widehat{A}_NX\|_{-},
\label{akissintherain}
\end{equation}
where 
\begin{equation}
 F(N):= \frac{\psi(N)}{\varphi(N)}\hs\hs\cdot\hspace*{-.1cm}\prod_{\substack{\text{$p$ prime}\\p^2\idi N}}\frac{p+1}{p-1}
 \hs\hs,\hspace{4pt}G(N):=\Bigg{(}\frac1{\psi(N)}+\frac1{\phi(N)}\Bigg{)}\hs\cdot\hspace*{-.1cm}\prod_{\substack{\text{$p$ prime}\\p^2\idi N}}\Big{(}1+\frac1p\Big{)}
\nonumber\end{equation}
and $\psi$, $\phi:\N\rightarrow\N$ are as defined in $(\ref{14.2Sept})$.
\label{4.3Aug}\end{lem}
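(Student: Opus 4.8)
The plan is to exploit the fact that, although $\A_N$ is awkward, its inverse $\A_N^{-1}$ factors as a Kronecker product of the explicit tridiagonal matrices in $(\ref{r1})$, so that both $\|\A_N^{-1}\|$-type quantities and the vectors $\lp_N,\bt_N$ are completely computable. Write $Y:=\A_NX$, so that $X=\A_N^{-1}Y$ and $\e^X$ is holomorphic precisely when $Y\geq0$; in general we only control the negative part $Y_-$, whose $L^1$ norm is $\|\A_NX\|_-$. Splitting $Y=Y_++Y_-$ gives $X=\A_N^{-1}Y_+ + \A_N^{-1}Y_-$, hence $\|X\|\leq\|\A_N^{-1}Y_+\|+\|\A_N^{-1}Y_-\|$. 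For the second term the crude bound $\|\A_N^{-1}Y_-\|\leq\|\,|\A_N^{-1}|\,\|_{\infty\to1}\cdot\|Y_-\|$ would already produce a term of the shape $G(N)\|\A_NX\|_-$; the factor $G(N)$ is exactly $\max_{t}\bigl(|\A_N^{-1}|^\T\mathds 1_N\bigr)(t)=\max_t\bt_N(t)$ rewritten via $(\ref{b})$, evaluated at $t'=\gcd(t,N/t)$ maximal, i.e. $t'$ equal to the radical of the square part of $N$, which is what yields the product over primes with $p^2\mid N$.

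First I would handle the prime-power case $N=p^n$, where everything is explicit. Here $\A_{p^n}^{-1}$ is the tridiagonal matrix in $(\ref{r1})$, and one reads off its column sums (equivalently, the entries of $\lp_{p^n}$ and $\bt_{p^n}$ from $(\ref{b})$): the interior columns sum to something like $\tfrac{1}{p^n}\cdot\tfrac{p-1}{p+1}$ after accounting for the $1+1/p^2$ diagonal and the two $-1/p$ off-diagonals, while the two boundary columns are slightly larger. The key linear-algebra input is the valence-type identity $(\ref{27.04.2015.3})$, which says $\mathds 1_N^\T(\text{diag of }\gcd(t,N/t))\,\A_N X/24$... more precisely that $\sum_t\varphi(t')\ord_{1/t}=k\psi(N)/24$. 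Rephrasing: pairing $Y=\A_NX$ against the weight vector $\varphi(\gcd(t,N/t))$ recovers $k\psi(N)$. I would use this to bound $\sigma(Y_+)$ in terms of $k$ and $\sigma(Y_-)$ — namely $\sigma(Y_+)=\sigma(Y)-\sigma(Y_-)$ and $\sigma(Y)$ is controlled by the valence formula after dividing by the smallest weight $\varphi(t')$, which is $1$ (at $t'=1$). Then $\|\A_N^{-1}Y_+\|\leq\max_t\bigl(|\A_N^{-1}|^\T\mathds 1\bigr)(t)\cdot\sigma(Y_+)$ — no, better: bound it by $\max$ over columns of $|\A_N^{-1}|\mathds 1$ weighted appropriately, turning the valence formula's left side into an upper bound for $\|X_+$-contribution$\|$; this is where $F(N)=\psi(N)/\varphi(N)\cdot\prod_{p^2\mid N}\tfrac{p+1}{p-1}$ emerges, as the ratio of the extreme values of $\bt_N$ (boundary vs.\ the weight $\varphi(t')/(t'\psi(N))$ appearing in the valence formula).

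Having the two estimates for $N=p^n$, I would assemble the general case by multiplicativity: both $F$ and $G$ are defined as products over the primes dividing $N$ (with a distinguished sub-product over $p$ with $p^2\mid N$), and $\A_N=\bigotimes_{p^n\|N}\A_{p^n}$ by $(\ref{13May})$, so $\A_N^{-1}=\bigotimes\A_{p^n}^{-1}$ and $|\A_N^{-1}|=\bigotimes|\A_{p^n}^{-1}|$. Since $L^1$ norms, positive/negative parts, and the pairing with the weight vector $\varphi(\gcd(t,N/t))=\prod_p\varphi(\gcd(p^{a},p^{n-a}))$ all behave multiplicatively under Kronecker products (the weight vector is itself a tensor product of the prime-power weight vectors), the inequality $(\ref{akissintherain})$ for $N$ follows by multiplying the prime-power inequalities — taking a little care that the bound is of the form ``linear term $+$ linear term'' rather than a single product, so one needs $\|Y_-\|$ to distribute correctly; this is handled by the elementary fact that for tensor products $\|u\otimes v\|_1=\|u\|_1\|v\|_1$ and $(u\otimes v)_-$ is dominated in $L^1$ by $\|u\|_1\|v_-\|_1+\|u_-\|_1\|v_+\|_1$ or similar. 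I expect the main obstacle to be precisely this bookkeeping at the junction of the two terms: getting the constants $F(N)$ and $G(N)$ to come out exactly as stated (rather than with extra slack) requires choosing the optimal $t$ at which to evaluate $\bt_N$ and $\lp_N$ in each local factor and verifying that the worst case for the $F$-term and the worst case for the $G$-term are compatible across all primes simultaneously; the rest is routine manipulation of $(\ref{r1})$, $(\ref{b})$ and $(\ref{27.04.2015.3})$.
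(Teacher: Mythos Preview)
Your opening moves match the paper exactly: set $Y:=\A_NX$, split $Y=Y_++Y_-$, bound $\|X\|\leq\|\A_N^{-1}Y_+\|+\|\A_N^{-1}Y_-\|$, and use the identity $\mathds{1}_N^\T\A_N^{-1}Y=\sigma(X)=k$ (your ``valence formula'', after unwinding $(\ref{20.3Sept})$) to control $\sigma(Y_+)$ in terms of $k$ and $Y_-$. Two things go wrong after that.

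First, a smaller point: your identification $G(N)=\max_t\bt_N(t)$ is off. From $(\ref{b})$ one gets $\max_t\bt_N(t)=\tfrac{1}{\phi(N)}\prod_{p^2\mid N}(1+1/p)$, which accounts only for the $\tfrac{1}{\phi(N)}$ half of $G(N)$. The other half arises because the bound on $\|\A_N^{-1}Y_+\|$ itself picks up a $Y_-$ term via $\lp_N^\T Y_+=k-\lp_N^\T Y_-$; that feedback contributes a second summand to the coefficient of $\|\A_NX\|_-$. You need to track both to land on $G(N)$ as stated.

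Second, and this is the genuine gap: the reduction ``prove it for prime powers and tensor up'' cannot work as you describe it. The multiplicativity $(\ref{13May})$ is about the \emph{matrices} $\A_N$; but $X$ --- and hence $Y=\A_NX$ --- is a general element of $\Z^{\D}$, not a pure tensor $u\otimes v$. So there are no prime-power vectors to which an inequality like $\|(u\otimes v)_-\|_1\leq\|u\|_1\|v_-\|_1+\|u_-\|_1\|v_+\|_1$ could be applied, and $Y_-$ has no useful tensor structure. The paper bypasses this entirely by working directly with arbitrary $N$: the only place multiplicativity enters is in the closed formulas $(\ref{b})$ for $\lp_N(t)=\phi(t')/(t'\psi(N))$ and $\bt_N(t)=\psi(t')/(t'\phi(N))$ with $t'=\gcd(t,N/t)$. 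One bounds $\|\A_N^{-1}Y_\pm\|\leq\bt_N^\T|Y_\pm|$ and uses $\lp_N^\T Y=k$, then simply reads off the extremes of $\psi(t')/t'$, $\phi(t')/t'$ and $\psi(t')/\phi(t')$ over $t'\in\cl{Q}_N:=\{d:d^2\mid N\}$; each is a product over primes with $p^2\mid N$, and this is what produces $F(N)$ and $G(N)$. No induction on the number of prime factors, and no decomposition of $X$ or $Y$, is needed --- or available.
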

We recall that for $X\in\ZD$, $\e^X$ is holomorphic if and only if $\A_NX\geq0$.
So, 
from 
the above lemma, we obtain: 
\begin{co}[Mersmann / Rouse-Webb]
For $X\in\ZD$, let $\e^X$ be a holomorphic eta quotient of weight $k/2$ on $\GN$. 
Then we have\label{20.2Sept}
\begin{equation}\|X\|\hspace{1pt}\leq\hspace{1pt} 
k\hspace{1pt}F(N).\label{ipll}\end{equation}
\end{co}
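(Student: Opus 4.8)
The plan is to obtain the corollary as an immediate specialization of Lemma~\ref{4.3Aug}, which we may assume is already established. The whole point is that the holomorphy hypothesis forces the second term on the right-hand side of (\ref{akissintherain}) to vanish, so essentially no new work is required beyond reading off the definitions.

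First I would invoke the observation recorded just before the statement of Corollary~\ref{20.2Sept}: for $X\in\ZD$, the eta quotient $\e^X$ is holomorphic if and only if $\A_NX\geq0$, that is, if and only if every entry of the vector $\A_NX$ is nonnegative. Since $\e^X$ is assumed holomorphic, we therefore have $\A_NX\geq0$.

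Next I would unwind the meaning of $\|\A_NX\|_-$ occurring in (\ref{akissintherain}). By the notation fixed at the end of the preceding section, for any $Y\in\R^{\D}$ one has $Y_-=\frac12(Y-|Y|)$ and $\|Y\|_-=\|Y_-\|=-\sigma(Y_-)$. Taking $Y=\A_NX$ and using $\A_NX\geq0$, we get $|\A_NX|=\A_NX$, whence the negative component $(\A_NX)_-$ is the zero vector and $\|\A_NX\|_-=0$.

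Finally, substituting $\|\A_NX\|_-=0$ into the inequality (\ref{akissintherain}) of Lemma~\ref{4.3Aug} collapses the bound to $\|X\|\leq k\cdot F(N)$, which is exactly (\ref{ipll}). I do not expect any genuine obstacle here: all of the analytic content resides in the lemma, and the corollary merely restricts that inequality to the holomorphic cone $\{X\in\ZD : \A_NX\geq0\}$, on which the correction term $G(N)\cdot\|\A_NX\|_-$ identically vanishes.
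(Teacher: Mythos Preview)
Your proof is correct and follows exactly the paper's approach: the paper derives the corollary immediately from Lemma~\ref{4.3Aug} by noting that holomorphy of $\e^X$ is equivalent to $\A_NX\geq0$, which forces $\|\A_NX\|_-=0$ and collapses (\ref{akissintherain}) to (\ref{ipll}).
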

\begin{asi}
 \textnormal{
The 
last inequality implies that 
the number of holomorphic eta quotients of weight $k/2$ on $\GN$ 
is less than $(2kF(N))^{\operatorname{d}(N)}$,
where $\operatorname{d}(N)$ denotes the number of divisors of $N$.
 But the dimension of the space of modular forms of any fixed even weight on $\GN$
 becomes arbitrarily large as $N\rightarrow\infty$ (see \cite{ds}). So, if we fix 
 the number of divisors of 
 $N$ along with a \hs$k\in4\N$, then except only finitely many possibilities for 
 $N$,
 the space of modular forms of weight $k/2$ 
 on $\GN$ never contains enough eta quotients to constitute
  a basis. This gives a partial answer to a question asked by Ono in \cite{on} about classification
 of the spaces of modular forms which are spanned by eta quotients.
}
 \end{asi}

\begin{re}\textnormal{Mersmann actually proved a variant of the above corollary in 
1991. 
As a part of my doctoral research,
I proved Lemma~\ref{4.3Aug} and obtained the above consequences in 2011 (and presented them 
 at the 1st EU-US Conference on Automorphic Forms and Related Topics
at Aachen in 2012). Independent of both Mersmann's and my earlier works (see \cite{c} and Chapter~3 in \cite{B-two}),
Rouse and Webb also proved the same statement as of Corollary~\ref{20.2Sept} (see Theorem~2 in \cite{rw}) 
in 2013
and drew a similar conclusion as above on the spaces of modular forms being spanned by
eta quotients (in fact, they studied the spaces spanned by eta quotients in much greater details in \cite{rw}).
}
\end{re}

\begin{co}
Let $f=\e^X$ be a weakly holomorphic eta quotient of weight $k/2$ on $\GN$
with $\|X\|\geq k\hspace{1pt}F(N)+\varepsilon$. Then we have 
\begin{equation}
\|\widehat{A}_NX\|_{-}\hspace{.5pt}\geq\varepsilon/G(N)\hspace{1pt}.\label{inik}
\end{equation}
\end{co}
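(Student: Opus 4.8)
The plan is to derive the final corollary directly from Lemma~\ref{4.3Aug} by a one-line rearrangement. Lemma~\ref{4.3Aug} asserts that for any eta quotient $\e^X$ of weight $k/2$ on $\GN$, one has
\begin{equation}
 \|X\|\leq k\cdot F(N)+G(N)\cdot\|\widehat{A}_NX\|_{-}\hspace{1.5pt}.\nonumber
\end{equation}
This holds for weakly holomorphic eta quotients as well, since the lemma is stated for arbitrary $X\in\ZD$ with no holomorphy hypothesis imposed. So the hypothesis $\|X\|\geq kF(N)+\varepsilon$ can be combined with this inequality.

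First I would write, from the hypothesis and the lemma,
\begin{equation}
 kF(N)+\varepsilon\leq\|X\|\leq kF(N)+G(N)\cdot\|\widehat{A}_NX\|_{-}\hspace{1.5pt}.\nonumber
\end{equation}
Subtracting $kF(N)$ from both ends gives $\varepsilon\leq G(N)\cdot\|\widehat{A}_NX\|_{-}$. Then I would observe that $G(N)>0$ for every $N\in\N$, which is clear from its defining product formula (each factor $1+1/p$ is positive, and $1/\psi(N)+1/\phi(N)>0$), so dividing by $G(N)$ is legitimate and yields $\|\widehat{A}_NX\|_{-}\geq\varepsilon/G(N)$, which is exactly \eqref{inik}.

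There is essentially no obstacle here: the corollary is a trivial algebraic consequence of Lemma~\ref{4.3Aug}, and the only point worth a remark is that Lemma~\ref{4.3Aug} does not require holomorphy, so it applies verbatim to the weakly holomorphic $f=\e^X$ in the statement. If I wanted to be slightly more careful I would also note that $\|\widehat A_N X\|_-$ is by definition $\pm\sigma((\widehat A_N X)_\pm)$ in the notation fixed at the end of Section~2, so the quantity in \eqref{inik} is the same object as the one appearing in \eqref{akissintherain}; no reconciliation of notation is needed. Thus the proof is a two-step chain of inequalities followed by a division by the positive constant $G(N)$.
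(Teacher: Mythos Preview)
Your proof is correct and matches the paper's approach: the paper states this corollary without proof, treating it as an immediate rearrangement of Lemma~\ref{4.3Aug}, which is exactly what you do.
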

\begin{co}
Let $f=\e^X\neq1$ be a weakly holomorphic eta quotient of weight $k/2$ 
on $\GN$, where $k\leq0$.
Then
we have
 \begin{equation}
\|\widehat{A}_NX\|_{-}\hspace{.5pt}\geq\left\{\begin{array}{ll}
                                               2/G(N)&\text{if $k=0$}\\
                                               |k|\cdot(F(N)+1)/G(N)&\text{otherwise.}
                                              \end{array}\right.
\label{in}
                \end{equation}                                                                                                      
\end{co}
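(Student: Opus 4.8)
The plan is to derive the final corollary directly from the previous corollary (the one involving $\varepsilon$) by choosing $\varepsilon$ appropriately, treating the two cases $k=0$ and $k<0$ separately. First I would recall that for a weakly holomorphic eta quotient $f=\e^X\neq1$ of weight $k/2$ on $\GN$, one always has $\|X\|\geq 2$, since $X\in\ZD$ is a nonzero integer vector and hence $\|X\|=\sigma(|X|)$ is a positive integer; moreover if $f$ is \emph{nonconstant}, $X$ has at least one strictly positive and one strictly negative entry (because the $q$-order of $f$ at the cusp $\infty$, governed by $\sum_d X_d/d$ up to the known normalization, and the behaviour at other cusps, force $\sigma(X_+)>0$ and $\sigma(X_-)<0$ unless $f$ is a monomial in a single $\e_d$, which is never a nonconstant eta quotient on $\GN$ of weight $\leq 0$), so in fact $\|X\|=\sigma(X_+)-\sigma(X_-)\geq 2$. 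This is the one input that needs a short justification.

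For the case $k=0$: here $\sigma(X)=0$, so $\sigma(X_+)=-\sigma(X_-)=\tfrac12\|X\|$, and since $f\neq 1$ we have $\|X\|\geq 2$. Now apply the preceding corollary with $\varepsilon := \|X\|-k\,F(N) = \|X\| \geq 2 > 0$; its hypothesis $\|X\|\geq k\,F(N)+\varepsilon$ holds with equality, so we conclude $\|\A_NX\|_-\geq \varepsilon/G(N)\geq 2/G(N)$, which is the first line of (\ref{in}).

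For the case $k<0$: by Corollary~\ref{20.2Sept} a \emph{holomorphic} eta quotient of weight $k/2$ with $k<0$ cannot exist (it would force $\|X\|\leq k\,F(N)<0$), so $\A_NX\not\geq 0$ and $\|\A_NX\|_->0$; but we need the sharper bound. I would use the valence formula (\ref{27.04.2015.3}): $\sum_{t\in\D}\varphi(\gcd(t,N/t))\cdot\ord_{1/t}(f;\GN)=k\psi(N)/24<0$, together with the identity $\|X\|=\sigma(X_+)-\sigma(X_-)$ and the bound from Lemma~\ref{4.3Aug}. Concretely, write $Y:=\A_NX$, so $\|Y\|_--\|Y\|_+ = -\sigma(Y) = -24\cdot(\text{weight-}k/2\text{ valence sum})\cdot(\text{normalization})$; more directly, combining $\sigma(Y)=\sum_t \A_N(t,\un)X$ with the row-sum computation behind (\ref{27.04.2015.3}) gives $\sigma(Y)$ proportional to $k\psi(N)$, hence $\|Y\|_+ = \|Y\|_- + c\,|k|$ for an explicit positive constant $c$ coming from $\psi$. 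Feeding $\|Y\|_+ \geq \|Y\|_-+ c|k|$ back into Lemma~\ref{4.3Aug}'s inequality $\|X\|\leq k\,F(N)+G(N)\|Y\|_-$ is the wrong direction, so instead I would apply the $\varepsilon$-corollary with $\varepsilon:=\|X\|-k\,F(N)$: since $k<0$ and $\|X\|\geq 2>0$ we have $\varepsilon\geq -kF(N)=|k|F(N)$, but to get the stated $|k|(F(N)+1)/G(N)$ I need $\|X\|\geq |k|$, i.e. $\|X\|+kF(N)\geq |k|F(N)+|k| = |k|(F(N)+1)$, so $\varepsilon\geq|k|(F(N)+1)$; the inequality $\|X\|\geq|k|$ follows because $-\sigma(X)=-\sum_d X_d$ equals (up to the weight normalization) $-k\cdot(\text{dimension count})$...

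The main obstacle, and the step I expect to cost the most, is pinning down the precise lower bound $\|X\|\geq |k|$ (equivalently $\|X\|_+ \geq |k|/2 + \|X\|_-$ type inequalities) purely from the weight being $k/2$ and $f$ being a genuine nonconstant eta quotient — one must rule out cancellation in $\sigma(X_+),\sigma(X_-)$ and use that each $\e_d$ itself has weight $1/2$, so that $\|X\|=\sum_d|X_d|\geq |\sum_d X_d| = |2\cdot\text{weight}| = |k|$ by the triangle inequality. This last observation is in fact immediate: $\|X\|=\sigma(|X|)\geq|\sigma(X)|=|k|$ since weight $=\tfrac12\sigma(X)$. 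Once that trivial triangle inequality is in hand, set $\varepsilon:=\|X\|-kF(N)$. When $k=0$ this is $\|X\|\geq 2$; when $k<0$ it is $\|X\|+|k|F(N)\geq |k|+|k|F(N)=|k|(F(N)+1)$. In both cases the hypothesis of the $\varepsilon$-corollary is met (with equality), so $\|\A_NX\|_-\geq\varepsilon/G(N)$ gives exactly (\ref{in}). So the "hard part" dissolves, and the real content is just the bookkeeping of constants plus the elementary remark $\|X\|\geq|k|$ and $\|X\|\geq 2$ for $f\neq 1$.
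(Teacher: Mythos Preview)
Your final paragraph is the correct proof and is essentially identical to the paper's: the triangle inequality gives $\|X\|\geq|\sigma(X)|=|k|$; when $k=0$ the extra observation that $X\neq0$ with $\sigma(X)=0$ forces at least two nonzero integer entries, so $\|X\|\geq2$; then apply Lemma~\ref{4.3Aug} (equivalently the $\varepsilon$-corollary with $\varepsilon=\|X\|-kF(N)$) to get both cases of~(\ref{in}). The intermediate detour through the valence formula is unnecessary and you correctly abandon it.

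One correction to your exposition: the opening claim that ``one always has $\|X\|\geq2$'' for $f\neq1$, together with the justification that $X$ must have both a strictly positive and a strictly negative entry, is false when $k<0$. For instance $f=1/\e_d$ has $X$ with a single entry $-1$, weight $-1/2$, and $\|X\|=1$. Your argument never actually uses $\|X\|\geq2$ in the $k<0$ case (you use $\|X\|\geq|k|$ there, which is correct), so the proof survives; but the general assertion and its parenthetical cusp-based justification should be deleted. The paper states only $\|X\|\geq\max\{2,|k|\}$ and proves the $2$ specifically for $k=0$.
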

\begin{proof}We have $\|X\|\geq|\sigma(X)|=|k|$. 
Since $\e^X\neq1$, 
$X\neq0$.
So, if
$\sigma(X)=k=0$, 
then $X\in\ZD$ has at least two nonzero entries. 
Hence, we have $\|X\|\geq\max\{2,|k|\}$ for all $k$. 
The claim now follows from (\ref{akissintherain}).
\end{proof}

\ \newline
\textit{Proof of Lemma~$\ref{4.3Aug}$}. 
 Let $Y:=\A_NX$. Then we have
     \begin{equation}\|X\|\hspace{.5pt}=\hspace{.5pt}\|\A_N^{-1}Y\|\hspace{1pt}\leq\hspace{1pt}
     \|\A_N^{-1}Y_{+}\|+\|\A_N^{-1}Y_{-}\|\hspace{.5pt}.\label{da1}\end{equation}
We define the set $\cl{Q}_N\subset\D$ by 
$\cl{Q}_N:=\{d\hspace{2.5pt}\idi\hspace{2.5pt}d^2\in\D\}$.
Then for all $t\in\D$, we have $(t,N/t)\in\cl{Q}_N$
and for all $d\in\cl{Q}_N$, we have $(d,N/d)=d$. 
For $d\in\cl{Q}_N$, let 
\begin{equation}
y_d:=\sum_{\substack{t\in\D\\Y_t<0\\(t,N/t)=d}}|Y_t|\hspace{.5pt}.\end{equation}
Since $\A_N$ is symmetric, from (\ref{b00}) and (\ref{b}), we get 
\begin{equation}
      \mathds{1}_N^\T\A_N^{-1}Y_{-}=-\sum_{d\in\cl{Q}_N}y_d\frac{\varphi(d)}{d\psi(N)} 
     \label{da2A}\end{equation}
and
 \begin{equation}     
      \|\A_N^{-1}Y_{-}\|\hspace{1pt}\leq\hspace{1pt}\mathds{1}_N^\T
      \hspace{1pt}|\A_N^{-1}|\hspace{2pt}|Y_{-}|=\sum_{d\in\cl{Q}_N}y_d\frac{\psi(d)}{d\varphi(N)}\hspace{1.3pt}.
     \label{da2}\end{equation}
Again from (\ref{b00}), (\ref{b}) and (\ref{da2A}),  we obtain 
\begin{align}
\dfrac1{\psi(N)}\Big{(}\min_{d\in\cl{Q}_N}\dfrac{\phi(d)}{d}\Big{)}\hs\mathds{1}_N^\T Y_{+}\hspace{1pt}&\leq\hspace{1pt}\mathds{1}_N^\T \A_N^{-1}Y_{+}=k-\mathds{1}_N^\T \A_N^{-1}Y_{-}\nonumber\\
&=k\hs+\sum_{d\in\cl{Q}_N}y_d\frac{\varphi(d)}{d\psi(N)}\hspace{2pt}.       
      \label{da3}\end{align}
\vspace*{-.2cm}It follows that
\begin{align}
 \|\A_N^{-1}Y_{+}\|\hspace{1.5pt}&\leq\hspace{1.5pt}\mathds{1}_N^\T\hspace{1pt}|\A_N^{-1}|\hspace{1pt}Y_{+}\hspace{1.5pt}\leq
 \hspace{1.5pt}
 \dfrac1{\phi(N)}\Big{(}\max_{d\in\cl{Q}_N}\dfrac{\psi(d)}{d}\Big{)}\hs\mathds{1}_N^\T Y_{+}\nonumber\\ 
 &\leq \frac{\psi(N)}{\phi(N)}\Big{(}\max_{d\in\cl{Q}_N}\dfrac{\psi(d)}{\phi(d)}\Big{)}\Bigg{(}k\hs+\sum_{d\in\cl{Q}_N}y_d\frac{\varphi(d)}{d\psi(N)}\Bigg{)}
\label{da4}\end{align}
where the first inequality is trivial, the second 
follows from (\ref{b00}) and (\ref{b}),
whereas  
and the third inequality follows from (\ref{da3})
and from the fact that $\psi(d)/d$ and $\phi(d)/d$ attain
respectively the maximum and minimum for the same values of $d$ in $\cl{Q}_N$.

Now, from (\ref{da1}), (\ref{da2}) and (\ref{da4}),
we get
\begin{align}
 \|X\| 
&\leq  k\cdot F(N)
+\Big{(}\max_{d\in\cl{Q}_N}\dfrac{\psi(d)}{d}\Big{)}\Bigg{(}\frac1{\phi(N)}+\frac1{\psi(N)}\Bigg{)}\cdot\sum_{d\in\cl{Q}_N}y_d\\
%
&\leq k\cdot F(N)+G(N)\cdot\|\A_NX\|_{-}\hs.\nonumber\end{align}
\qed

\section{Proof of the finiteness}
Mersmann's finiteness theorem
follows 
from (\ref{27.04.2015.3}) or (\ref{ipll}), if for any given $k\in\N$
the existence of a simple holomorphic eta quotient of weight $k/2$ and level $N$
implies only finitely many possibilities for $N$. Below we show that this is indeed true:
\vspace{3pt}

 Let 
 $N=P_1^{r_1}P_2^{r_2}\cdots P_m^{r_m}$, where $P_1<P_2<\cdots<P_m$ are primes. 
For any $d\|N$, there exists a canonical bijection between $\bb{Z}^{\D}$ and $\bb{Z}^{\cl{D}_{{N/d}}\times\cl{D}_d}$. 
If $d=P_i^{r_i}$, then we denote the image of $X\in\bb{Z}^{\D}$ by $X^{(i)}$ under this bijection,  i.~e.
if we set $N_i:=\frac{N}{P_i^{r_i}}$ then 
\begin{equation}
X_{\nu,\hspace{.5pt}P_i^j}^{(i)}=X_{\nu P_i^j}\hspace{2pt},\hspace{3pt} \ \nu|N_i\hspace{5pt}\textnormal{and}\hspace{5pt}0\leq j\leq r_i\hspace{2pt}.
\label{swargosondhyan}\end{equation}
For any nonnegative integer $j\leq r_i$, we call $X^{(i)}_j:=\{X_{\nu,\hspace{.5pt} P_i^{j}}\}_{\nu\hspace{.5pt}\in D(N_i)}$ 
the \emph{$j$-th column} of $X^{(i)}$.
Let
$F_m:=F(p^2_1p^2_2\cdots p^2_m)$, where $p_1=2$, $p_2=3,\hdots, p_m$ are the first $m$ primes
and the function $F$ 
is as defined in Lemma~\ref{4.3Aug}.
 It is easy to note that $F(N)\leq F_m$ and 
 from Mertens' theorem (see 
 \cite{e}), it follows that   
 that $F_m=O(\log^4 m)$ as $m\rightarrow\infty$. Later in this section, we shall show that for $k\in\N$,
 there exists a constant $C_k$ such that 
 if $\e^X$ is a simple holomorphic eta quotient of level $N$ and weight $k/2$, then
 for all primes $P_i\idi N$, 
 we have
\begin{equation}
 P_i^{\delta_i}\leq C_kF(N)(F(N)+1)^2,
\label{koelnsued}\end{equation}
where 
$\delta_i:= 1\hspace{2pt}+$ the highest number of consecutive zero columns in $X^{(i)}$.
In particular, from (\ref{koelnsued}) for $i=m$ we get $P_m=O_k(\log^{12}m)$. Since the Prime Number Theorem (see \cite{e}) 
implies that $P_m\geq p_m\sim m\log m$, 
Inequality (\ref{koelnsued}) puts a bound on 
$m$ as well as on all primes $P_i|N$.\hspace{2pt}\footnote{\hspace{1.5pt} 
In general, this 
naive bound is 
very large.
For example, we have $C_1={1}/{4}$. The order of magnitude of the largest prime $p_m$ for which the 
inequation $p_m<F_m(F_m+1)^2/4$ holds is $10^{18}$.
Whereas actually, 
the greatest prime divisor of the level of a 
simple holomorphic eta quotient of weight $1/2$ is at most 3 (see \cite{z}, \cite{c} or \cite{B-six}).}

For each $i$, $X^{(i)}$ has $r_i+1$ columns. 
Since $\eta^X$ is primitive, the first column of
$X^{(i)}$ is nonzero 
and since $\e^X$ is of level $N$, the last column of $X^{(i)}$ is nonzero. 
Therefore,
the number of nonzero columns in $X^{(i)}$ is at 
least $\frac{r_i}{\delta_i}+1$.
Hence, from (\ref{ipll})
we get
\begin{equation}
 \frac{r_i}{\delta_i}+1\leq k\hs F(N). 
\label{Enumerable: the sets whose elements are easy to recognise if given, but there may not exist an algorithm to find.}\end{equation}
Since $(\ref{koelnsued})$ and $(\ref{Enumerable: the sets whose elements are easy to recognise if given, but there may not exist an algorithm to find.})$
together 
impose a bound on $r_i$, we have only finitely many possibilities for $N$ if $k$ is given.\qed


Now, we construct a decreasing function $g:\bb{Z}\rightarrow\Q_{>0}$
 such that
if  
$\eta^X$  is a simple holomorphic 
eta quotient of weight $k/2$ and level $N$, then 
for any prime $P_i|N$, $(\ref{koelnsued})$ is satisfied if we put 
\begin{equation}C_k=\frac{k}{\hspace{1pt}2g(k-1)}\hspace{2pt}.\label{saint0}\end{equation}                                                             
For all $n<0$, we set $g(n)=2\cdot|n|$ and we set $g(0)=2$. 
For $n>0$, below we define $g(n)$ inductively. 
Let $G$ be the function as defined in Lemma~\ref{4.3Aug}.
Since $G(N)\rightarrow0$ as $N\rightarrow\infty$, for all sufficiently large $M\in\N$, we have 
\begin{equation}
G(M)<g(n-1)\hspace{2pt}. 
\label{saint1}\end{equation}
Let $M_n$ be the least positive integer $M$ for which (\ref{saint1}) holds and let 
\begin{equation}
c_n:=G(M_n).
\label{saint1A}
\end{equation}
As before, Mertens' theorem and Prime Number Theorem (see \cite{e}) together imply that
there are only finitely many $M\in\N$ 
such that for each prime power $p^{r}\|M$, we have
\begin{equation}
\frac{M^2G(M)}{2p^{r_n(M)}\varphi(M)}(n(F(M)+1)+c_n)\geq g(n-1)-c_n\hspace{1.5pt},
\label{saint2}\end{equation}
where 
\begin{equation} 
r_n(M):=\frac{r}{n\hs F(M)+c_n-1}.
\nonumber\end{equation}                                                                                    
Let $M'_n$ be the greatest positive integer $M$ for which (\ref{saint2}) holds and let
\begin{equation}
\label{saint2A}N_n:=\max\{M_n,M_n^\prime\}.
\end{equation}
We define 
\begin{equation}
g(n):=\min_{M_n\leq M\leq N_n} G(M).
\label{saint3}\end{equation}

\

In order to prove (\ref{koelnsued}), we require the following lemmas:
\begin{lem}
For  $N\in\N$, $X\in\bb{Z}^{\D}$ and any prime power $P_i^{r_i}\|N$, we have\hspace{1pt}\footnote{\hspace{1.5pt}In fact, for any $d\|N$, if we denote by $X^ {[d]}$ the 
image
of $X\in\bb{Z}^{\D}$ in $\bb{Z}^{\cl{D}_{{N/d}}\times\cl{D}_d}$, 
then we have $(\A_NX)^{[d]}=\A_{{N/d}}X^{[d]}\A_d$\hspace{1pt} (see Section~4 in \cite{B-six}).}
$$(\A_NX)^{(i)}=
{\A_{N_i}X^{(i)}\A_{P_i^{r_i}}},$$
where $(\A_NX)^{(i)}$ and $X^{(i)}$ are defined similarly as in $(\ref{swargosondhyan})$. 
\label{il}\end{lem}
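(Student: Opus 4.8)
The plan is to verify the claimed matrix identity $(\A_NX)^{(i)} = \A_{N_i}X^{(i)}\A_{P_i^{r_i}}$ by unwinding the definitions and invoking the Kronecker product factorization~(\ref{13May}). First I would recall that the canonical bijection $\Z^{\D}\cong\Z^{\cl{D}_{N_i}\times\cl{D}_{P_i^{r_i}}}$ of~(\ref{swargosondhyan}) is precisely the ``matricization'' map that turns a vector indexed by $\D$ into a matrix indexed by $\cl{D}_{N_i}\times\cl{D}_{P_i^{r_i}}$, using that every divisor of $N$ factors uniquely as $\nu P_i^{j}$ with $\nu\mid N_i$ and $0\le j\le r_i$ (this uses $P_i^{r_i}\|N$, i.e.\ $\gcd(P_i^{r_i},N_i)=1$). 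Under this identification, the general linear-algebra fact is that if a matrix acts on $\Z^{\D}$ as a Kronecker product $B\otimes C$ with $B\in\Z^{\cl{D}_{N_i}\times\cl{D}_{N_i}}$ and $C\in\Z^{\cl{D}_{P_i^{r_i}}\times\cl{D}_{P_i^{r_i}}}$, then on the matricized side it acts as $Z\mapsto BZC^{\T}$.

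Next I would apply this with $B$ and $C$ coming from the factorization~(\ref{13May}): writing $\A_N = \A_{N_i}\otimes\A_{P_i^{r_i}}$ (legitimate by the footnote to~(\ref{13May}), since choosing an ordering of the primes with $P_i$ last induces the lexicographic ordering on $\cl{D}_N$ compatible with~(\ref{swargosondhyan})), the displayed identity becomes $(\A_NX)^{(i)} = \A_{N_i}\,X^{(i)}\,\A_{P_i^{r_i}}^{\T}$. Since $\A_{p^n}$ is symmetric (this is exactly the point of the symmetrization~(\ref{20.3Sept}); it is visible in~(\ref{23July})), we have $\A_{P_i^{r_i}}^{\T}=\A_{P_i^{r_i}}$, and the formula reduces to the stated one. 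To keep the write-up self-contained I would make the Kronecker-to-matrix translation explicit at the level of entries: for $\mu\mid N_i$ and $0\le \ell\le r_i$,
\begin{equation}
(\A_NX)^{(i)}_{\mu,P_i^{\ell}} = \sum_{\substack{\nu\mid N_i\\ 0\le j\le r_i}}\A_{N_i}(\mu,\nu)\,\A_{P_i^{r_i}}(P_i^{\ell},P_i^{j})\,X^{(i)}_{\nu,P_i^{j}} = \bigl(\A_{N_i}X^{(i)}\A_{P_i^{r_i}}\bigr)_{\mu,P_i^{\ell}},
\end{equation}
where the first equality is~(\ref{13May}) together with the multiplicativity of $\A_N(t,d)$ in $N$, $d$, $t$ noted just before~(\ref{13May}), and the second is the definition of matrix multiplication.

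I do not expect a serious obstacle here; the statement is essentially a bookkeeping lemma that repackages~(\ref{13May}) in the coordinates of~(\ref{swargosondhyan}). The one point requiring a little care — and the only place I would slow down — is matching the index orderings: one must check that the ordering of $\cl{D}_N$ used implicitly in~(\ref{13May}) (induced by a chosen ordering of the primes dividing $N$) can be taken to place $P_i$ last, so that the Kronecker factor corresponding to $P_i^{r_i}$ sits on the right and corresponds to the ``column'' index of $X^{(i)}$; the footnote to~(\ref{13May}) already grants that~(\ref{13May}) holds for every such ordering, so this is immediate. The general $d\|N$ version in the footnote of the lemma follows by the same argument verbatim, factoring $\A_N=\A_{N/d}\otimes\A_d$ and transposing, again using symmetry of each prime-power block.
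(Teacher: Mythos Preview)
Your proposal is correct and follows essentially the same route as the paper's proof: the paper simply states that the identity follows from the Kronecker factorization $\A_N=\A_{P_i^{r_i}}\otimes\A_{N_i}$ together with the symmetry of these matrices, citing a reference for the standard vec--Kronecker identity $(A\otimes B)\,\mathrm{vec}(Z)=\mathrm{vec}(BZA^{\T})$. Your entrywise verification and your remark on choosing the prime ordering so that $P_i$ comes last are exactly the unpacking of that one-line argument.
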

For $N, P_i$ and $r_i$ as above, for $X\in\ZD$ and for a real interval $I\subseteq[0,r_i]$, 
by $X^{(i)}_{I}$ we denote 
the submatrix of $X^{(i)}$ consisting of successive columns of it with indices in $I$.
For $N\in\N$ and $M\in\D$,  by $\pi_{\nm}:\bb{Z}^{\D}\rightarrow\vspace{1.5pt}\bb{Z}^{\cl{D}_M}$ we denote 
the projection map, i.~e. 
\begin{equation}
 \pi_{\nm}(X)_d=X_d \hspace{1.5pt}\text{ for all $X\in\ZD$ and $d\in\cl{D}_M$.}
 \nonumber\end{equation}
\begin{lem}
Let $N, P_i$ and $r_i$ as above. Let $X\in\ZD$ 
and let
$a,b\in\bb{Z}$ with  $0\leq a< b\leq r_i$ 
such that $X^{(i)}_{(a,b)}\in\emptyset$ $(\textnormal{i.~e. \ } b=a+1)$ or $X^{(i)}_{(a,b)}=0$.
Let $N':=P_i^aN_i$.
Then\label{mark} $$
\|\A_NX\|_-\geq P_i^{r_i-a}\cdot\left\|\A_ {N'}\hspace{1.5pt}\pi_{N,N'}(X)\right\|_-
-\frac{N^2}{P_i^{b-a}\phi(N)}\cdot\|X\|_{+}\hspace{.75pt}.$$
\end{lem}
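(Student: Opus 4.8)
The plan is to reduce the statement, by means of Lemma~\ref{il}, to the block structure of $\A_N$ with respect to the prime $P_i$, and then to compare $\A_NX$ with $\A_{N'}\pi_{N,N'}(X)$ one block--column at a time. Put $N_i:=N/P_i^{r_i}$ and write everything in the $\cl{D}_{N_i}\times\cl{D}_{P_i^{r_i}}$--matrix form of $(\ref{swargosondhyan})$. By Lemma~\ref{il}, $(\A_NX)^{(i)}=\A_{N_i}\,X^{(i)}\,\A_{P_i^{r_i}}$, and applying the same lemma with $N'=P_i^aN_i$ in place of $N$ (note $P_i^a\|N'$) gives $(\A_{N'}\pi_{N,N'}(X))^{(i)}=\A_{N_i}\,X^{(i)}_{[0,a]}\,\A_{P_i^a}$, where $X^{(i)}_{[0,a]}$ denotes the columns $0,\dots,a$ of $X^{(i)}$. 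I would then split $X=\iota(\pi_{N,N'}(X))+W$, with $\iota$ the zero--padding from $\cl{D}_{N'}$ to $\cl{D}_N$ and $W$ equal to $X$ on the divisors of $N$ not dividing $N'$ and $0$ elsewhere; the hypothesis that $X^{(i)}_{(a,b)}$ is empty or vanishes forces $W$ to be supported on columns $b,\dots,r_i$ of $W^{(i)}$ (equivalently, on divisors $d$ of $N$ with $P_i^b\mid d$), where these columns agree with those of $X^{(i)}$.

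Next, using the explicit entries $\A_{p^n}(p^j,p^l)=p^{\,n-|j-l|}$ from $(\ref{23July})$, for $j\le a$ the $j$-th column of $(\A_N\iota(\pi_{N,N'}(X)))^{(i)}$ equals $P_i^{\,r_i-a}$ times the $j$-th column of $(\A_{N'}\pi_{N,N'}(X))^{(i)}$, since $P_i^{\,r_i-|j-l|}=P_i^{\,r_i-a}\cdot P_i^{\,a-|j-l|}$ whenever $j,l\le a$. Because $\|\A_NX\|_-\ge\sum_{j=0}^a\|((\A_NX)^{(i)}_j)_-\|$ and, by linearity, $(\A_NX)^{(i)}_j=P_i^{\,r_i-a}(\A_{N'}\pi_{N,N'}(X))^{(i)}_j+(\A_NW)^{(i)}_j$ for $j\le a$, the elementary inequality $\|(u+v)_-\|\ge\|u_-\|-\|v_+\|$ gives
\[
\|\A_NX\|_-\ \ge\ P_i^{\,r_i-a}\,\|\A_{N'}\pi_{N,N'}(X)\|_-\ -\ \sum_{j=0}^{a}\big\|\big((\A_NW)^{(i)}_j\big)_+\big\|.
\]

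It then remains to bound the error $\mathcal E:=\sum_{j=0}^{a}\|((\A_NW)^{(i)}_j)_+\|$. Here $(\A_NW)^{(i)}_j=\sum_{l=b}^{r_i}P_i^{\,r_i-(l-j)}\A_{N_i}X^{(i)}_l$; since $\A_{N_i}$ has nonnegative entries one may pass to $(X^{(i)}_l)_+$ and bound each $\|\A_{N_i}v\|$ with $v\ge0$ by the largest column sum of $\A_{N_i}$ times $\|v\|$. Two gains are essential: every weight $P_i^{\,r_i-(l-j)}$ that occurs has $j\le a<b\le l$, so it is at most $P_i^{\,r_i-(b-a)}$, which is the source of the factor $1/P_i^{\,b-a}$; and $\sum_{l}\|(X^{(i)}_l)_+\|=\|X\|_+$. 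Assembling these with the geometric sum $\sum_{j=0}^{a}P_i^{\,j}$ and the multiplicative description of $\A_N$ coming from $(\ref{13May})$ and $(\ref{23July})$ (through which the relevant column sums are expressed in terms of $\psi$ and $\phi$) should deliver $\mathcal E\le\tfrac{N^2}{P_i^{\,b-a}\phi(N)}\|X\|_+$, completing the proof.

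The step I expect to be the real work is this last estimate. Bounding the geometric weights by their maximum and bounding $\A_{N_i}$ by its column--sum norm each loses a little, so to land at exactly the constant $N^2/(P_i^{\,b-a}\phi(N))$ one must combine them economically rather than multiplying worst cases --- for instance by keeping the factor $\sum_{j\le a}P_i^{\,j}$ attached to a single column sum of $\A_{N'}$ (rather than of $\A_{N_i}$ and $\A_{P_i^a}$ separately) and using the exact multiplicative formulas for these column sums. Everything before that point is routine bookkeeping with the Kronecker/block structure of $(\ref{13May})$ and Lemma~\ref{il}.
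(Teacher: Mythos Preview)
Your approach is essentially identical to the paper's: both restrict to the first $a+1$ columns of $(\A_NX)^{(i)}$, split $X$ as $\widetilde s_{N',N}(X)+(X-\widetilde s_{N',N}(X))$, use Lemma~\ref{il} and $(\ref{23July})$ to identify the first piece with $P_i^{\,r_i-a}\A_{N'}\pi_{N,N'}(X)$, and bound the second piece via the column--sum estimate $\mathds{1}_{N_i}^{\T}\A_{N_i}\le\tfrac{N_i^2}{\phi(N_i)}\mathds{1}_{N_i}^{\T}$ together with the geometric sum $\sum_{j\le a}P_i^{\,r_i-l+j}\le P_i^{\,r_i-(b-a)}/(1-1/P_i)$. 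Your worry in the last paragraph is unfounded --- multiplying these two ``worst--case'' bounds already yields exactly $\tfrac{N_i^2 P_i^{\,r_i-(b-a)}}{\phi(N_i)(1-1/P_i)}=\tfrac{N^2}{P_i^{\,b-a}\phi(N)}$, so no extra economy is needed.
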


\begin{lem}
For $N\in\N$ and  $X\in\ZD$, if $\A_NX\ngeq0$, then\label{mx}
$$
\|\A_NX\|_-\geq\dfrac{g(\sigma(X))}{G(N)}.$$
\end{lem}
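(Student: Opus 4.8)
\textbf{Plan for the proof of Lemma~\ref{mx}.}
The idea is to induct on $\sigma(X)$ and to peel off prime powers from $N$ one at a time, using Lemma~\ref{mark} to relate $\|\A_NX\|_-$ to the corresponding quantity at a proper divisor of $N$, and using the explicit recursive construction of $g$ (from (\ref{saint1})--(\ref{saint3})) to absorb the error term. First I would dispose of the base-type cases $\sigma(X)\leq0$: if $\A_NX\ngeq0$ then $\e^X$ is a weakly holomorphic (non-holomorphic) eta quotient, and since we may assume $\e^X\neq1$ (otherwise $\A_NX=0\geq0$), the bounds $g(n)=2|n|$ for $n<0$ and $g(0)=2$ are exactly the right-hand sides appearing in (\ref{in}) divided by $G(N)$; so Corollary~4 gives the claim immediately for $\sigma(X)\leq0$. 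Thus assume $n:=\sigma(X)>0$ and that the lemma holds for all smaller values of $\sigma$ (over all levels).

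Next, for a fixed prime $P_i\mid N$ with $P_i^{r_i}\|N$, consider the columns of $X^{(i)}$. Two sub-cases: either $\e^X$ has a ``long run'' of zero columns in some $X^{(i)}$, or not. If for every prime $P_i\mid N$ every maximal block of consecutive zero columns of $X^{(i)}$ is short — quantitatively, short enough that (\ref{Enumerable...}) together with (\ref{koelnsued})-type estimates forces $N\leq N_n$ in the notation of (\ref{saint2A}) — then $N$ lies in the finite range $[M_n,N_n]$ (or $N<M_n$), and by the very definition (\ref{saint3}) of $g(n)$ as $\min_{M_n\le M\le N_n}G(M)$ we get $G(N)\geq g(n)$, i.e. $1\geq g(n)/G(N)$; since $\A_NX\ngeq0$ means $\|\A_NX\|_-\geq\frac1{24}$ is a positive element of $\frac1{24}\Z$, in fact $\|\A_NX\|_-\geq 1$ after clearing, and this beats $g(n)/G(N)$. (I need to check the normalisation here carefully — this is where the constant $2$ in $g(0)$ and the factor $G(N)$ are calibrated.) Otherwise, pick $P_i$ and indices $0\leq a<b\leq r_i$ with $X^{(i)}_{(a,b)}$ empty or zero and $b-a$ large; apply Lemma~\ref{mark} with $N':=P_i^aN_i$. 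Writing $X':=\pi_{N,N'}(X)$, note $\sigma(X')=\sigma(X)=n$ is unchanged, so the inductive hypothesis does \emph{not} directly apply at the same $\sigma$; instead I would either (i) arrange that $\A_{N'}X'\ngeq0$ as well and iterate the peeling until reaching a level in the finite range controlled by $g(n)$, or (ii) observe that if $\A_{N'}X'\geq0$ then $\e^{X'}$ is holomorphic, $\|\A_{N'}X'\|_-=0$, and one extracts the estimate purely from controlling $\|X\|_+$ via Corollary~\ref{20.2Sept} and the error term $\frac{N^2}{P_i^{b-a}\phi(N)}\|X\|_+$ in Lemma~\ref{mark}, which is small precisely because $b-a$ is large. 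Combining, $\|\A_NX\|_-\geq P_i^{r_i-a}\|\A_{N'}X'\|_- - (\text{small})$, and the geometric factor $P_i^{r_i-a}\geq P_i$ dominates.

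The delicate bookkeeping is to make these two mechanisms (the finite-range/monotonicity argument and the peeling-with-error argument) interlock so that whichever case occurs, the bound $g(n)/G(N)$ comes out — and this is exactly what the apparently baroque definitions (\ref{saint1})--(\ref{saint3}) are engineered to guarantee: $M_n$ is chosen so that $G(M)<g(n-1)$ beyond it, $c_n=G(M_n)$, and $M'_n$ is the largest $M$ for which a peeling step fails to make progress, so that $N_n=\max\{M_n,M'_n\}$ bounds the ``bad'' levels and $g(n)=\min_{[M_n,N_n]}G$ is a valid floor on $G(N)$ for all such $N$. So the real content is: \emph{every} level $N$ with $\A_NX\ngeq0$ and $\sigma(X)=n$ is either in the finite window $[1,N_n]$ (handled by the $\min$ definition of $g(n)$) or admits a peeling step strictly decreasing $N$ while losing at most a controlled amount, so finitely many peels land in the window.

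\textbf{Main obstacle.} The hard part will be the error-term management in the peeling step: after applying Lemma~\ref{mark} one must bound $\|X\|_+$ — and $\e^{X'}$ on $\GN'$ need not be holomorphic, so Corollary~\ref{20.2Sept} does not apply to it directly; one has to feed back Lemma~\ref{4.3Aug} (the version with the $\|\A_{N'}X'\|_-$ term) and close a self-referential inequality, which is precisely why $g$ must be defined by this nested recursion rather than by a closed formula. Getting the inequality (\ref{saint2}) to be the \emph{sharp} threshold that makes the induction go through — neither too weak (loses control of $N$) nor unachievable — is the crux, and I expect it to require exactly the factor $n(F(M)+1)+c_n$ and the exponent $r_n(M)=r/(nF(M)+c_n-1)$ appearing there, matching the column-counting bound (\ref{Enumerable...}).
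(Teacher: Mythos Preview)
Your plan has the right ingredients but two genuine gaps. First, the claim ``$\sigma(X')=\sigma(X)=n$ is unchanged'' is false: with $N'=P_i^aN_i$ the projection $X'=\pi_{N,N'}(X)$ retains only columns $0,\dots,a$ of $X^{(i)}$, whereas the columns $b,\dots,r_i$ (which you have \emph{not} assumed zero) contribute to $\sigma(X)$ but not to $\sigma(X')$. So $\sigma(X')$ can be anything, and in particular your fix~(ii) is vacuous: if $\A_{N'}X'\geq0$ then Lemma~\ref{mark} yields only $\|\A_NX\|_-\geq -(\text{positive error term})$, which is no lower bound at all. Second, your dichotomy ``no long run forces $N\leq N_n$'' is unjustified: the column count $r_i/(b-a)+1\leq\|X\|$ bounds $r_i$ only once $\|X\|$ is known to be small, and nothing in your setup provides that --- you cannot invoke Corollary~\ref{20.2Sept}, since $\e^X$ is not holomorphic here.

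The paper repairs both issues by inducting on the level rather than on $\sigma(X)$, and by inserting two steps you omit. After disposing of $n\leq0$ via (\ref{in}) and of $M\leq N_n$ via the definition of $g(n)$, it first handles $\|X\|\geq nF(M)+c_n$ directly by (\ref{inik}); only then may one assume $\|X\|$ small, which via the column count forces a long zero-run for the particular prime $p$ supplied by the failure of (\ref{saint2}). The decisive case split is then on whether an \emph{extremal} column $X^{(i)}_0$ or $X^{(i)}_r$ vanishes. If one does (after possibly applying the Fricke involution, say $X^{(i)}_r=0$), then $X=\widetilde{s}_{M',M}(X)$, the second summand in (\ref{bubblegum}) is zero, and one passes to $M'<M$ with \emph{no error term} and with $\sigma$ unchanged --- here the induction on the level is essential. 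If neither extremal column vanishes, one applies Fricke to arrange $n_1:=\sigma(\pi_{M,M'}(X))<n$ strictly, and then the error term from Lemma~\ref{mark} is absorbed by $g(n-1)-c_n$ thanks to the choice of $p$. Your fix~(i), ``iterate the peeling'', amounts to a secondary induction on the level inside each fixed $n$; that can be made rigorous, but only after the corrections above, and at that point it is simpler to induct on the level from the outset.
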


We shall prove these lemmas
in the next section.
 Let $s_{\mn}:\bb{Z}^{\cl{D}_M}\rightarrow\bb{Z}^{\D}$ be a section of the projection $\pi_{\nm}$ 
such that for all 
$X\in\bb{Z}^{\cl{D}_M}$ and $d\in\D$, we have 
\begin{equation}
 s_{\mn}(X)_d=\begin{cases}X_d&\text{if $d\idi M$}\\
              0  &\text{otherwise.}
  \end{cases}
\nonumber\end{equation}
We define\hspace{1pt} $\widetilde{s}_{\mn}:= s_{\mn}\circ\pi_{\nm}$.

\ \newline\textit{Proof of (\ref{koelnsued}).} Let $a,b\in\bb{Z}$ with  $0\leq a< b\leq r_i$ such that $X^{(i)}_{(a,b)}=0$ and $a-b=\delta_i$. 
For ease of notation, we write 
$p=P_i$, $N'=p^aN_i$ and $r=r_i$.
Since $\eta^X$ is primitive, 
$\widetilde{s}_{{N',N}}(X)\neq0$ and since $\e^X$ is of level $N$, 
$\widetilde{s}_{{N',N}}(X)\neq X$. 
Let $k_1:=\sigma(\widetilde{s}_{{N',N}}(X))$
and $k_2:=\sigma(X-\widetilde{s}_{{N',N}}(X))$. 
So, $k_1+k_2=k$.
Now, if $k_1$ or \vspace{1pt} $k_2\leq0$, then by (\ref{in}), we have respectively $\A_N\widetilde{s}_{{N',N}}(X)\ngeq0$
or 
$\A_N(X-\widetilde{s}_{{N',N}}(X))\ngeq0$. Otherwise,\vspace{1pt} $0<k_1,\hspace{1pt}k_2<k$.
Since $\eta^X$ is irreducible, we still have either  $\A_N\widetilde{s}_{{N',N}}(X)\ngeq0$ or $\A_N(X-\widetilde{s}_{{N',N}}(X))\ngeq0$. 
Therefore if necessary, replacing $X$ by $\widetilde{X}$ 
where $\e^{\widetilde{X}}=\al_{n,N}(\e^X)$ for some $n\in\cl{E}_N$ with $p\idi n$
(hence, replacing $a$ by $r-b$ and $b$ by $r-a$), 
we may assume that $k_1<k$ and \begin{equation}\A_{{N}}\widetilde{s}_{{N',N}}(X)\ngeq0\hspace{2pt}.\label{somik1}\end{equation}  
We have \begin{equation}
\widetilde{s}_{{N',N}}(X)^{(i)} \A_{p^{r}}=\left(\pi_{{N,N'}}(X)^{(i)}\hspace{1pt}\big{|}\hspace{1pt}0\hspace{1pt}\right)
\left(\begin{array}{c|c}
p^{r-a}\A_{p^{a}}&B\\
\hline
\text{0}&\text{0}\\
\end{array}\right)       
\label{somik2}
\nonumber\end{equation}
where the $j$-th column of $B=p^{r-a-j}$( the last column of $\A_{p^{a}}$), for all $j\leq r-a$. Hence from (\ref{somik1}), via 
Lemma \ref{il} we get
\begin{equation}
 \A_{{{N'}}}\pi_{{N,N'}}(X)\ngeq0\hspace{2pt}.
\label{somik3}\end{equation}
Since 
$\|X\|_{+}=
(\|X\|+k)/2$, 
from Lemma \ref{mark} we have 
\begin{equation}
 \|\A_{{N}}X\|_-\geq p^{r-a}\|A_ {{N'}}\hspace{1pt}\pi_{{N,N'}}(X)\|_-
-\frac{N^2}{2p^{b-a}\phi(N)}(\|X\|+k)\hspace{2pt}.
\label{fartig0}\end{equation}
Since $\e^X$ is holomorphic, 
$\|\A_{{N}}X\|_-=0$. So, from (\ref{fartig0}), (\ref{somik3}), Lemma \ref{mx} and 
(\ref{ipll})
, we get 
\begin{equation}
\frac{kN^2}{2p^{b-a}\phi(N)}(F(N)+1)\geq p^{r-a}\frac{g(k_1)}{G(N')}\geq\frac{g(k-1)}{G(N)},\label{fartig1}\end{equation}
where the last inequality holds since $g$ is a decreasing function and since 
$p^{r-a}G(N)\geq G(N')$.
It follows trivially from the definitions of $F$ and $G$ that
\begin{equation}
 \frac{N^2G(N)}{\phi(N)}\leq F(N)(F(N)+1).
\label{fartig2}\end{equation}
Now, Inequality (\ref{koelnsued}) follows from (\ref{fartig1}), (\ref{fartig2}) and (\ref{saint0}).\qed
\section{Proofs of the lemmas}
\label{symsec}
\begin{flushleft}\textit{Proof of Lemma \ref{il}.} 
Follows from the facts that $\A_{{N}}=\A_{{P_i^{r_i}}}\otimes \A_{{N_i}}$\hspace{1pt} and that these matrices  are symmetric (see Lemma 4.3.1 in \cite{a}).\qed 
\end{flushleft}
\  \newline \textit{Proof of Lemma \ref{mark}.} 
To lighten the notation, we write $p=P_i$ and $r=r_i$. From Lemma~\ref{il}, we have 
\begin{equation}
 (\A_{N}X)^{(i)}=\A_{{N_i}}\widetilde{s}_{{N',N}}(X)^{(i)}\A_{p^e}+\A_{{N_i}}\left(X^{(i)}-\widetilde{s}_{{N',N}}(X)^{(i)}\right)\A_{p^e}\hspace{1pt}.
\label{ujbuk}\end{equation}
Therefore,
\begin{align}
\widetilde{s}_{{N',N}}(\A_{N}X)^{(i)}&=\Big{(}\pi_{{N,N'}}(\A_{N}X)^{(i)}\hspace{1pt}\big{|}\hspace{1pt}0\Big{)}
=\Big{(}(\A_{N}X)^{(i)}_{[0,a]}\hspace{1pt}\big{|}\hspace{1pt}0\Big{)}\nonumber\\
&=\A_{{N_i}}\Big{(}\pi_{{N,N'}}(X)^{(i)}\hspace{1pt}\big{|}\hspace{1pt}0\Big{)}\left(\begin{array}{c|c}
p^{r-a}\A_{p^{a}}&\text{0}\\
\hline
\text{0}&\text{0}\\
\end{array}\right)\label{bubblegum}\\
&\quad\qquad\quad+
\A_{{N_i}}\Big{(}X^{(i)}-\widetilde{s}_{{N',N}}(X)^{(i)}\Big{)}
\left(\begin{array}{c|c}
\text{\Large 0}&\text{\Large 0}\\
\hline
p^{r-|j-\ell|}_{\quad\substack{\\ \\ b\leq j\leq r \\ \hspace*{3pt}0\leq\ell\leq a}} &\hspace{1pt}_\text{\Large 0}\\
\end{array}\right),
\nonumber\end{align}
where the first two equalities are trivial and the third follows from (\ref{ujbuk}) and (\ref{23July}).
By Lemma \ref{il}, the absolute value of the sum of the negative entries 
in the 1st term of (\ref{bubblegum}) is\hspace{1pt} $p^{r-a}\|\A_{N'}\pi_{{N,N'}}(X)\|_-\hspace{1pt}$. 
The\vspace{3pt} sum of positive entries in the 2nd term of (\ref{bubblegum}) is less than or equal to 
$$\mathds{1}_{{N_i}}^\T
\A_{{N_i}} 
X_{+}^{(i)} 
\left(\begin{array}{c|c}
\text{\Large 0}&\text{\Large 0}\\
\hline
p^{r-|j-\ell|}_{\quad\substack{\\ \\ b\leq j\leq r \\ \hspace*{3pt}0\leq\ell\leq a}} &\hspace{1pt}_\text{\Large 0}\\
\end{array}\right)\mathds{1}_{p^r}.$$
We have
\begin{equation}
\mathds{1}_{{N_i}}^\T\A_{{N_i}}\leq\frac{N_i^2}{\phi(N_i)}\mathds{1}_{{N_i}}^\T.
\end{equation}
The above inequality follows from (\ref{23July}) if $N_i$ is a prime power. The general case then
follows by multiplicativity (see (\ref{13May})).
Since
\begin{equation}
\left(\begin{array}{c|c}
\text{\Large 0}&\text{\Large 0}\\
\hline
p^{r-|j-\ell|}_{\quad\substack{\\ \\ b\leq j\leq r \\ \hspace*{3pt}0\leq \ell\leq a}} &\hspace{1pt}_\text{\Large 0}\\
\end{array}\right)\mathds{1}_{p^r}\leq\hspace{1pt} 
\frac{p^{r-(b-a)}}{1-\frac1p}\hspace{1pt}\mathds{1}_{p^r},
\end{equation}
the sum of the positive entries in the second term of (\ref{bubblegum}) is less than or equal to
\begin{equation}
 \frac{p^{r-(b-a)}N_i^2}{(1-\frac1p)\phi(N_i)}\hspace{1.5pt}\mathds{1}_{{N_i}}^\T X_{+}^{(i)}\mathds{1}_{p^r}=\frac{N^2}{p^{b-a}\phi(N)}\hspace{1pt}
 \|X\|_{+}\hspace{.75pt}. 
\nonumber\end{equation}
Thus, we have
\begin{align*}
 \|\A_{{N}}X\|_-\hspace{1.5pt}&\geq\hspace{1.5pt}\|\hspace{1pt}\widetilde{s}_{{N',N}}(\A_{{N}}X)\|_{-}\\
 &\geq\hspace{1.5pt} p^{r-a}\|\A_{{N'}}\pi_{{N,N'}}(X)\|_-
 -\frac{N^2}{p^{b-a}\phi(N)}\hspace{1pt}
 \|X\|_{+}\hspace{.75pt}.
\end{align*}
\qed

\ \newline \textit{Proof of Lemma \ref{mx}.} For $X\in\ZD$, we have $\A_NX\in\ZD$. 
So, the lemma holds trivially for $N=1$. We proceed by induction on $N$. Let $M>1$ be an integer 
and let us assume that the lemma holds for all $N<M$. Let $X\in\bb{Z}^{\cl{D}_M}$ such that $A_MX\ngeq0$. 
Let $n:=\sigma(X)$. If $n\leq0$, then 
the lemma holds by (\ref{in}). So, let us assume that $n>0$. 
By the definition of $g$, the lemma holds trivially for $M$ if $M\leq N_n$,
where $N_n$ is as defined in (\ref{saint2A}).
So, we may assume that \begin{equation}M>N_n.\end{equation}
Since $c_n\geq g(n)$ (see (\ref{saint1A}) and (\ref{saint3})),
if $\|X\|\geq nF(M)+c_n$, the claim holds by (\ref{inik}).
So, we may also assume that
\begin{equation}
 \|X\|< nF(M)+c_n.
\label{oldtimes}\end{equation}
Since $M>N_n\geq M_n^\prime$ (see (\ref{saint2A}) and (\ref{saint2})), there exists a prime $p$ dividing $M$ such that
\begin{equation}
\frac{M^2G(M)}{2p^{r_n(M)}\varphi(M)}(n(F(M)+1)+c_n)<g(n-1)-c_n,
\label{stts0}\end{equation}
where \begin{equation} 
r_n(M):=\frac{r}{n\hs F(M)+c_n-1},
\label{asyousow}\end{equation}   
where $r\in\N$ such that $p^{r}\|M$. 
Since $A_{{M}}X\ngeq0$, $X$ is nonzero. 
For $P_i:=p,$ $r_i:=r$ and $M_i:=M/p^r$,
we define $X^{(i)}\in\Z^{\cl{D}_{M_i}\times \cl{D}_{P^{r_i}_i}}$ by  (\ref{swargosondhyan}),
 replacing $N_i$ 
 with $M_i$ 
 in it.
 
First we consider the case where at least one of the columns $X^{(i)}_0$ or $X^{(i)}_r$ is entirely zero.
If   $X^{(i)}_r\neq0$ and $X^{(i)}_0=0$, then we replace $X$ by $\widetilde{X}$ (thereby interchanging the 
columns $X^{(i)}_0$ and $X^{(i)}_r$),
where $\e^{\widetilde{X}}$ is the image under the Fricke involution $W_{M}$.
As \hspace{1pt}$\A_{{M}}(M\odot t,M\odot d)=\A_{{M}}(t, d)$ for all $t,d\in\Z^{\cl{D}_M}\hspace*{-1.5pt},$  from the definition of Fricke involution (see (\ref{29.1Jul})), it follows indeed that
\begin{equation}
\|\A_{{M}}X\|_-=\hspace{3pt}\|\A_{{M}}\widetilde{X}\|_-\hspace{.75pt}.
\label{golden-afternoon}\end{equation}
So, we may assume that $X^{(i)}_r=0$. Let 
$a\in\N$ be
such that $X^{(i)}_a\neq0$  and $X^{(i)}_b=0$ for all $b$ with $a<b\leq r$. Let $M':=\frac{M}{p^{r-a}}$. Then we have \begin{equation}
\widetilde{s}_{{M',M}}(X)=X.
\label{sky-over-the-neon}\end{equation}
 Since $A_{{M}}\widetilde{s}_{{M',M}}(X)\ngeq0$, the same argument which led us from (\ref{somik1})
to (\ref{somik3}) also implies that 
 $A_{{M'}}\pi_{{M,M'}}(X)\ngeq0$. So, we obtain
\begin{equation}\|A_{{M}}X\|_-\geq\|\hspace{1pt}\widetilde{s}_{{M',M}}(A_{{M}}X)\|_-=p^{r-a}\|A_{{M'}}\hspace{1.5pt}\pi_{{M,M'}}(X)\|_-\hspace{2pt},
\label{late-in-library}
\end{equation}
 where 
 the last equality 
follows from (\ref{bubblegum}), (\ref{sky-over-the-neon}) and Lemma~\ref{il}.
Now, by the induction hypothesis, we get
\begin{equation}
p^{r-a}\|A_{{M'}}\hspace{1.5pt}\pi_{{M,M'}}(X)\|_-\geq
 p^{r-a}\frac{g(n)}{G(M')}\geq\frac{g(n)}{G(M)},
\label{koeln-messe-deutz}\end{equation}
where the first inequality holds since 
 $\sigma(\pi_{{M,M'}}(X))=\sigma(X)=n$ 
and the second inequality holds since $p^{r-a}G(M)\geq G(M')$.
  Thus, from (\ref{late-in-library}) and (\ref{koeln-messe-deutz}) the claim follows in this case.

Now, we consider the 
remaining case where neither of the columns $X^{(i)}_0$ and $X^{(i)}_r$ are entirely zero.
We choose $a,b\in\Z$ with $0\leq a<b\leq r$
such that neither $X^{(i)}_a$ nor $X^{(i)}_b$ is entirely zero but $X^{(i)}_j=0$ for all $j\in\Z\cap(a,b)$
and $b-a -1=$ the highest number of consecutive zero columns in $X^{(i)}$. 
Since $X^{(i)}$ has $r+1$ columns
and since none of its extremal columns are entirely zero, number of nonzero columns of $X^{(i)}$ is 
at least ${r}/{(b-a)}+1$. Hence, from (\ref{oldtimes}) we get
\begin{equation}
\frac{r}{b-a}+1\leq nF(M)+c_n.
\label{Itwouldbefinishedsoon}\end{equation}
Let $M':=\frac{M}{p^{r-a}}$ and $n_1:=\sigma(\pi_{{M,M'}}(X))$.
 Since $A_{{M}}X\ngeq0$,
either $A_{{M}}\hspace{1pt}\widetilde{s}_{{M',M}}(X)$ $\ngeq0$ 
or $A_{{M}}(X-\widetilde{s}_{{M',M}}(X))\ngeq0$. If necessary, replacing $X$ by $\widetilde{X}$ 
where 
$\e^{\widetilde{X}}$ is the image of $\e^X$ under the Fricke involution $W_N$ (see (\ref{golden-afternoon}))
and using 
an argument 
similar to what we used at the beginning of 
the proof of 
(\ref{koelnsued}) (see (\ref{somik1})),
we may assume that $n_1<n$ and $A_{{M}}\hspace{1pt}\widetilde{s}_{{M',M}}(X)\ngeq0$.
As before, that implies:
 $A_{{M'}}\pi_{{M,M'}}(X)\ngeq0$. 
 Since 
$\|X\|_{+}=
(\|X\|+n)/2$, 
from Lemma \ref{mark} we have 
\begin{equation}
 \|A_MX\|_-\geq p^{r-a}\|A_{{M'}}\pi_{{M,M'}}(X)\|_-
 -\frac{M^2}{2p^{b-a}\phi(M)}(\|X\|+n)\hspace{.75pt}.
\label{oldtimes1}\end{equation}
From (\ref{oldtimes1}), (\ref{oldtimes}) and from the induction hypothesis, 
it follows that
\begin{align}
\|A_MX\|_- &\geq  p^{r-a}\frac{g(n_1)}{G(M')} 
-\frac{M^2}{2p^{b-a}\varphi(M)}(n(F(M)+1)+c_n)\nonumber\\
 &\geq \frac{g(n-1)}{G(M)} 
-\frac{M^2}{2p^{b-a}\varphi(M)}(n(F(M)+1)+c_n),
\label{oldtimes2}\end{align}
where the last inequality holds since $g$ is a decreasing function and since 
$p^{r-a}G(M)\geq G(M')$.
Since (\ref{Itwouldbefinishedsoon}) implies that $b-a\geq r_n(M)$ (see (\ref{asyousow})),
from (\ref{stts0}) we get 
\begin{equation}g(n-1)-\frac{M^2G(M)}{2p^{b-a}\varphi(M)}(n(F(M)+1)+c_n)
>c_n\geq g(n),
\label{stts2}\end{equation}
where the last inequality follows from the definition of $g$ (see (\ref{saint1A}) and (\ref{saint3})).
From (\ref{oldtimes2}) and (\ref{stts2}), the claim follows. \qed
\section*{Acknowledgments}
I would like to thank Don Zagier for his encouragement in writing up this article.
I am also thankful to 
him 
as well as to
Sander Zwegers and Christian~Wei\ss \ 
for 
their comments on an earlier version of the manuscript. 
I would 
like to thank
Jeremy Rouse and John Webb for their 
very friendly 
communication regarding the result which we obtained independently. 
I~am grateful to the Max Planck Institute for Mathematics in Bonn and 
to the CIRM~:~FBK (International Center for Mathematical Research of the Bruno Kessler Foundation) in Trento
for providing me with office spaces and
supporting me with 
fellowships 
during the preparation of this article.

\bibliography{fixedweight-bibtex}

\begin{thebibliography}{10}
\providecommand{\url}[1]{#1}
\csname url@samestyle\endcsname
\providecommand{\newblock}{\relax}
\providecommand{\bibinfo}[2]{#2}
\providecommand{\BIBentrySTDinterwordspacing}{\spaceskip=0pt\relax}
\providecommand{\BIBentryALTinterwordstretchfactor}{4}
\providecommand{\BIBentryALTinterwordspacing}{\spaceskip=\fontdimen2\font plus
\BIBentryALTinterwordstretchfactor\fontdimen3\font minus
  \fontdimen4\font\relax}
\providecommand{\BIBforeignlanguage}[2]{{%
\expandafter\ifx\csname l@#1\endcsname\relax
\typeout{** WARNING: IEEEtranS.bst: No hyphenation pattern has been}%
\typeout{** loaded for the language `#1'. Using the pattern for}%
\typeout{** the default language instead.}%
\else
\language=\csname l@#1\endcsname
\fi
#2}}
\providecommand{\BIBdecl}{\relax}
\BIBdecl

\bibitem{al}
A.~O.~L. Atkin and J.~Lehner, ``Hecke operators on {$\Gamma _{0}(m)$},''
  \emph{Math. Ann.}, vol. 185, pp. 134--160, 1970, {MR~0268123},
  {Zbl~0177.34901}, \url{http://dx.doi.org/10.1007/BF01359701}.

\bibitem{B-three}
S.~Bhattacharya, ``Algorithmic determination of irreducibility of holomorphic
  eta quotients,'' preprint, \url{http://arxiv.org/pdf/1602.03087.pdf}.

\bibitem{B-four}
------, ``Finiteness of irreducible holomorphic eta quotients of a given
  level,'' preprint, \url{http://arxiv.org/pdf/1602.02814.pdf}.

\bibitem{B-six}
------, ``Holomorphic eta quotients of weight $1/2$,'' preprint,
  \url{http://arxiv.org/pdf/1602.02835.pdf}.

\bibitem{B-seven}
------, ``{Infinite families of simple holomorphic eta quotients},'' in
  preparation.

\bibitem{B-two}
------, ``{Factorization of holomorphic eta quotients},'' Ph.D thesis,
  Rheinische Friedrich-Wilhelms-Universit\"at Bonn, 2014,
  \url{hss.ulb.uni-bonn.de/2014/3711/3711.pdf}.

\bibitem{ds}
F.~{Diamond} and J.~{Shurman}, \emph{A First Course in Modular Forms}.\hskip
  1em plus 0.5em minus 0.4em\relax Springer-Verlag, New York, 2005, {Graduate
  Texts in Mathematics. 228}, \url{http://dx.doi.org/10.1007/b138781}.

\bibitem{e}
G.~H. Hardy and E.~M. Wright, \emph{An introduction to the theory of numbers},
  6th~ed.\hskip 1em plus 0.5em minus 0.4em\relax Oxford University Press,
  Oxford, 2008, {MR~2445243}.

\bibitem{a}
R.~A. Horn and C.~R. Johnson, \emph{Topics in {M}atrix {A}nalysis}.\hskip 1em
  plus 0.5em minus 0.4em\relax Cambridge University Press, Cambridge, 1994,
  {MR~1288752}.

\bibitem{b}
G.~K{\"o}hler, \emph{Eta products and theta series identities}, ser. Springer
  Monographs in Mathematics.\hskip 1em plus 0.5em minus 0.4em\relax Springer,
  Heidelberg, 2011, {MR~2766155},
  \url{http://dx.doi.org/10.1007/978-3-642-16152-0}.

\bibitem{ymart}
Y.~Martin, ``Multiplicative {$\eta$}-quotients,'' \emph{Trans. Amer. Math.
  Soc.}, vol. 348, no.~12, pp. 4825--4856, 1996, {MR~1376550},
  \url{http://dx.doi.org/10.1090/S0002-9947-96-01743-6}.

\bibitem{c}
G.~Mersmann, ``Holomorphe $\eta$-produkte und nichtverschwindende ganze
  modul-formen f\"ur {$\Gamma_0(N)$},'' Diplomarbeit, Rheinische
  Friedrich-Wilhelms-Universit\"at Bonn, 1991,
  \url{https://sites.google.com/site/soumyabhattacharya/miscellany/Mersmann.pdf}.

\bibitem{on}
K.~Ono, \emph{The web of modularity: arithmetic of the coefficients of modular
  forms and {$q$}-series}, ser. CBMS Regional Conference Series in
  Mathematics.\hskip 1em plus 0.5em minus 0.4em\relax Published for the
  Conference Board of the Mathematical Sciences, Washington, DC; by the
  American Mathematical Society, Providence, RI, 2004, vol. 102, { MR~2020489}.

\bibitem{ra}
R.~A. Rankin, \emph{Modular forms and functions}.\hskip 1em plus 0.5em minus
  0.4em\relax Cambridge University Press, Cambridge, 1977, {MR~0498390}.

\bibitem{rw}
J.~Rouse and J.~J. Webb, ``On spaces of modular forms spanned by
  eta-quotients,'' \emph{Adv. Math.}, vol. 272, pp. 200--224, 2015,
  {MR~3303232}, {Zbl~1327.11026},
  \url{http://dx.doi.org/10.1016/j.aim.2014.12.002}.

\bibitem{z}
D.~Zagier, ``Elliptic modular forms and their applications,'' in \emph{The
  1-2-3 of modular forms}, ser. Universitext.\hskip 1em plus 0.5em minus
  0.4em\relax Springer, Berlin, 2008, pp. 1--103, {MR~2409678},
  \url{http://dx.doi.org/10.1007/978-3-540-74119-0_1}.

\end{thebibliography}
\bibliographystyle{IEEEtranS}
\nocite{*}

 \end{document}